\newtheorem{theorem}{Theorem}[section]
\newtheorem*{theorem*}{Main Theorem}
\newtheorem{corollary}[theorem]{Corollary}
\newtheorem{lemma}[theorem]{Lemma}
\newtheorem{proposition}[theorem]{Proposition}
\theoremstyle{definition}
\newtheorem{definition}[theorem]{Definition}
\newtheorem{condition}[theorem]{Condition}
\newtheorem*{question*}{Question}
\newtheorem{question**}{Question}
\newtheorem*{conjecture*}{Conjecture}
\newtheorem{example}[theorem]{Example}
\newtheorem*{claim*}{Claim}
\newtheorem{mainthm}{Theorem}
\newtheorem{maincor}{Corollary}
\numberwithin{equation}{theorem}
\def\Ext{\operatorname{Ext}}
\def\Db{\mathsf{D^b}}
\def\Kb{\mathsf{K^b}}
\def\ds{\mathsf{D_{sg}}}
\def\Hom{\operatorname{Hom}}
\def\mod{\operatorname{\mathsf{mod}}}
\def\proj{\operatorname{\mathsf{proj}}}
\def\inj{\operatorname{\mathsf{inj}}}
\def\tri{\operatorname{\mathsf{tri}}}
\def\add{\operatorname{\mathsf{add}}}
\newcommand{\op}{\mathsf{op}}
\newcommand{\A}{\mathcal{A}}\newcommand{\B}{\mathcal{B}}
\newcommand{\C}{\mathcal{C}}
\renewcommand{\S}{\mathcal{S}}
\newcommand{\T}{\mathcal{T}}
\newcommand{\U}{\mathcal{U}}
\newcommand{\V}{\mathcal{V}}
\newcommand{\X}{\mathcal{X}}
\newcommand{\Ker}{\operatorname{Ker}}
\renewcommand{\Im}{\operatorname{Im}}
\newcommand{\pd}{\mathsf{pd}}
\newcommand{\id}{\mathsf{id}}
\newcommand{\gd}{\mathsf{gl.dim}}
\newcommand{\xto}{\xrightarrow}
\begin{document}
\setlength{\baselineskip}{15pt}
\title[Singular equivalences of functor categories]{Singular equivalences of functor categories via Auslander-Buchweitz approximations}
\author{Yasuaki Ogawa}
\address{Graduate School of Mathematics, Nagoya University, Furo-cho, Chikusa-ku, Nagoya, 464-8602, Japan}
\email{m11019b@math.nagoya-u.ac.jp}

\keywords{Singular equivalence, cotilting subcategory, canonical module, functor category}
\thanks{2010 {\em Mathematics Subject Classification.} Primary 16G10; Secondary 16G50, 18E35}
\maketitle

\begin{abstract}
The aim of this paper is to construct singular equivalences between functor categories.
As a special case, we show that there exists a singular equivalence arising from a cotilting module $T$,
namely, the singularity category of $(^\perp T)/[T]$ and that of $(\mod A)/[T]$ are triangle equivalent.
In particular, the canonical module $\omega$ over a commutative Noetherian ring induces a singular equivalence between $(\mathsf{CM}R)/[\omega]$ and $(\mod R)/[\omega]$, which generalizes Matsui-Takahashi's theorem.
Our result is based on a sufficient condition for an additive category $\A$ and its subcategory $\X$ so that the canonical inclusion $\X\hookrightarrow\A$ induces a singular equivalence $\ds(\A)\simeq \ds(\X)$,
which is a functor category version of Xiao-Wu Chen's theorem.
\end{abstract}

\section{Introduction}\label{sect:1}
Let $\A$ be an additive category with weak-kernels.
Then the functor category $\mod\A$, the category of finitely presented contravariant functors from $\A$ to the category of abelian groups, is abelian.
The notion of \textit{singularity category of $\A$} is defined to be the Verdier quotient
$$\ds(\A):=\frac{\Db(\mod\A)}{\Kb(\proj\A)},$$
where we denote by $\mathsf{D}^{\textnormal{b}}(\mod\A)$ the bounded derived category, and by $\mathsf{K}^{\textnormal{b}}(\proj\A)$ the homotopy category of bounded complexes whose terms are projective.
This concept was introduced as a homological invariant of rings by Buchweitz \cite{Buc86}.
Recently it was applied by Orlov to study Landau-Ginzburg models \cite{Orl04}.
A lot of studies on singularity categories has been done in various approaches (e.g. \cite{Iya18, KV, Orl09, Ric, Zim}).

For additive categories $\A$ and $\A'$ with weak-kernels,
we say that $\A$ is \textit{singularly equivalent} to $\A'$ if there exists a triangle equivalence $\ds(\A)\simeq\ds(\A')$ \cite{ZZ}.
If $\Lambda $ is an Iwanaga-Gorenstein ring, then the singularity category of $\Lambda $ is triangle equivalent to the stable category of Cohen-Macaulay $\Lambda $-modules.
Thus the singular equivalence is a generalization of the stable equivalence for Iwanaga-Gorenstein rings.

It is a basic problem to compare homological propeties of a ring $\Lambda$ with its subalgebra $e\Lambda e$ given by an idempotent $e\in\Lambda$ (e.g. \cite{APT92, CPS, DR}).
In this context, Xiao-Wu Chen investigated a sufficient condition for a ring $\Lambda $ and its idempotent subalgebra $e\Lambda e$
so that they induce a triangle equivalence $\ds(\Lambda )\xto{\sim}\ds(e\Lambda e)$ \cite[Thm. 1.3]{Che}.
The first aim of this article is to provide its functor category version by using the following observations on Serre and Verdier quotients:
Let $\X$ be a contravariantly finite subcategory of an additive category $\A$ with weak-kernels.
Then $\X$ also admits weak-kernels, hence the canonical functor $Q: \mod\A\to\mod\X$ induces an equivalence
\begin{equation}\label{left_recollement}
\frac{\mod\A}{\mod(\A/[\X])}\xto{\sim}\mod\X,
\end{equation}
where the fraction denotes the Serre quotient (e.g. \cite[Prop. 3.9]{Buc97}).
Moreover, the equivalence (\ref{left_recollement}) induces a triangle equivalence
\begin{equation}\label{left_recollement2}
\frac{\mathsf{D}^{\textnormal{b}}(\mod\A)}{\mathsf{D}^{\textnormal{b}}_{\A/[\X]}(\mod\A)}\xto{\sim}\mathsf{D}^{\textnormal{b}}(\mod\X),
\end{equation}
where $\mathsf{D}^{\textnormal{b}}_{\A/[\X]}(\mod\A)$ is a thick subcategory consisting of objects whose cohomologies belong to $\mod(\A/[\X])$ (see \cite[Thm. 3.2]{Miy} and \cite[Thm. 2.3]{CPS}).
The equivalence (\ref{left_recollement2}) gives the following first result of this paper.

\begin{mainthm}[Lemma \ref{lem:Chen1}, Theorem \ref{thm:Chen2}]\label{mainthm:functor_category_version}
Let $\A$ be an additive category with weak-kernels and $\X$ its contravariantly finite full subcategory.
Suppose that $\pd_{\X}(\A(-,M)|_{\X})<\infty$ for any $M\in\A$
and  $\pd_{\A}(F)<\infty$ for any $F\in\mod(\A/[\X])$.
Then the canonical inclusion $\X\hookrightarrow\A$ induces a triangle equivalence $\bar{Q}:\ds(\A)\to\ds(\X)$.
\end{mainthm}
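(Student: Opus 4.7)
The plan is to deduce the theorem from the triangle equivalence (\ref{left_recollement2}) by passing from bounded derived categories to singularity categories via the iterated Verdier quotient formula. The two hypotheses are tailored so that, respectively, the image of $\Kb(\proj\A)$ can be matched with $\Kb(\proj\X)$, and so that $\Db_{\A/[\X]}(\mod\A)$ is forced to lie inside $\Kb(\proj\A)$.

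First I would define $\bar Q$. The restriction functor $Q\colon\mod\A\to\mod\X$ is exact, hence lifts to a triangle functor $\Db(\mod\A)\to\Db(\mod\X)$. The hypothesis $\pd_\X(\A(-,M)|_\X)<\infty$ says precisely that $Q(\A(-,M))\in\Kb(\proj\X)$ for each $M\in\A$; extending to bounded complexes of projectives gives $Q(\Kb(\proj\A))\subseteq\Kb(\proj\X)$, which produces the induced functor $\bar Q\colon\ds(\A)\to\ds(\X)$. Furthermore, for $X\in\X$ the identity $Q(\A(-,X))=\X(-,X)$ shows that $Q(\Kb(\proj\A))$ in fact generates $\Kb(\proj\X)$ as a thick subcategory of $\Db(\mod\X)$, a strengthening needed in the final assembly.

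Next, the second hypothesis states that every $F\in\mod(\A/[\X])$ satisfies $\pd_\A(F)<\infty$. By a standard d\'evissage on the bounded $t$-structure of $\Db(\mod\A)$, every bounded complex with cohomologies in $\mod(\A/[\X])$ is quasi-isomorphic to a bounded complex of $\A$-projectives, so $\Db_{\A/[\X]}(\mod\A)\subseteq\Kb(\proj\A)$. With both inclusions in hand, applying the iterated Verdier quotient formula yields
\[
\ds(\A)=\frac{\Db(\mod\A)}{\Kb(\proj\A)}\simeq\frac{\Db(\mod\A)/\Db_{\A/[\X]}(\mod\A)}{\Kb(\proj\A)/\Db_{\A/[\X]}(\mod\A)}\simeq\frac{\Db(\mod\X)}{\Kb(\proj\X)}=\ds(\X),
\]
where the middle equivalence is obtained from (\ref{left_recollement2}) together with the thick-generation statement of the previous paragraph, which identifies the image of $\Kb(\proj\A)$ with $\Kb(\proj\X)$. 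By construction the resulting equivalence coincides with $\bar Q$.

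The main technical obstacle I anticipate is the thick-generation step: one must verify not only the inclusion $Q(\Kb(\proj\A))\subseteq\Kb(\proj\X)$ (which is immediate from the first hypothesis), but also that the thick subcategory generated by this image is all of $\Kb(\proj\X)$. Once this is secured, both the d\'evissage for the second inclusion and the iterated Verdier quotient manipulation are essentially formal, so the whole theorem reduces to a careful bookkeeping of subcategories across the equivalence (\ref{left_recollement2}).
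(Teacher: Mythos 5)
Your proposal is correct and takes essentially the same route as the paper: define $\bar{Q}$ via the first hypothesis, note that the second hypothesis places $\Db_{\A/[\X]}(\mod\A)$ inside $\Kb(\proj\A)$ by d\'evissage, and conclude by Verdier-quotient bookkeeping over the equivalence (\ref{left_recollement2}), identifying the image of $\Kb(\proj\A)$ in $\Db(\mod\X)$ with a generator of $\Kb(\proj\X)$. The only (minor) difference is that the paper proves a two-sided quotient lemma (Proposition \ref{prop:Verdier}) that needs no containment between the two subcategories and hence also yields the converse direction recorded in Theorem \ref{thm:Chen2}, whereas you first establish the containment and then invoke the third isomorphism theorem for Verdier quotients, which is exactly the special case of that lemma needed here.
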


Our second result is an application of Theorem \ref{mainthm:functor_category_version}, which provides examples of singularly equivalent categories.
We denote by $\widehat{\X}$ the full subcategory of $\C$ consisting of objects $M$ which admit an exact sequence
$$0\to X_n\to X_{n-1}\to \cdots\to X_0\to M\to 0$$
with $X_n,\dots, X_0\in\X$ for some $n\in\mathbb{Z}_{\geq 0}$.
Our result will be stated under the following condition
which is a generalization of the setting appearing in Auslander-Buchweitz theory (see Condition \ref{AB_condition} for details).
A map $f :N\to M$ in $\C$ is called an \textit{$\X$-epimorphism} if the induced map $\C(-,N)|_{\X}\xto{f\circ -}\C(-,M)|_{\X}$ is surjective.

\begin{condition}\label{condition}
Let $\C$ be an abelian category with enough projectives and let $\A\supseteq\X\supseteq\boldsymbol{\omega}$ be a sequence of full subcategories in $\C$
such that $\X$ and $\boldsymbol{\omega}$ are contravariantly finite in $\A$.
We consider the following conditions:
\begin{enumerate}
\item[(AB1)] If a morphism $f:N\to M$ in $\A$ is an $\boldsymbol{\omega}$-epimorphism, then the kernel of $f$ belongs to $\A$.
\item[(AB2)] $\Ext^i_\C(X,I)=0$ for any $X\in\X, I\in\boldsymbol{\omega}$ and $i>0$.
\item[(AB3)] For any $M\in\A$, there exists an exact sequence $0\to Y_M\to X_M\xto{f} M$ in $\A$ such that $f$ is a right $\X$-approximation of $M$ and $Y_M\in\widehat{\boldsymbol{\omega}}$.
\end{enumerate}
\end{condition}

For example, the classical Auslander-Buchweitz theory (Condition \ref{AB_condition}) provides us with the triple $(\C=\A, \X, \boldsymbol{\omega})$ satisfies the Condition \ref{condition}.
Note that, in contrary to Condition \ref{AB_condition}, they are not required that: $\boldsymbol{\omega}$ is a \textit{cogenerator of $\X$}; each morphism $f$ appearing in $0\to Y_M\to X_M\xto{f} M$ of (AB3) is surjective.

Since $\X/[\boldsymbol{\omega}]$ can be regarded as an analog of the costable category, we denote by
$$
\overline{\A} := \A/[\boldsymbol{\omega}]\ \ \textnormal{and}\ \ \overline{\X} := \X/[\boldsymbol{\omega}].
$$

Our main result is the following:

\begin{mainthm}[Theorem \ref{thm:singular_equivalence_from_AB_approximation}]
\label{mainthm:singular_equivalence_from_AB_approximation}
Under Condition \ref{condition},
the canonical inclusion $\overline{\X}\hookrightarrow\overline{\A}$
induces a triangle equivalence $\ds(\overline{\A})\xto{\sim}\ds(\overline{\X})$.
\end{mainthm}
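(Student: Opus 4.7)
The strategy is to apply Theorem~\ref{mainthm:functor_category_version} to the pair $\overline{\X}\hookrightarrow\overline{\A}$. Accordingly, four hypotheses must be verified: (a) $\overline{\A}$ has weak-kernels; (b) $\overline{\X}$ is contravariantly finite in $\overline{\A}$; (c) $\pd_{\overline{\X}}(\overline{\A}(-,M)|_{\overline{\X}})<\infty$ for every $M\in\overline{\A}$; (d) $\pd_{\overline{\A}}(F)<\infty$ for every $F\in\mod(\overline{\A}/[\overline{\X}])=\mod(\A/[\X])$.

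The first two hypotheses are quick. For (b), condition (AB3) furnishes, for each $M\in\A$, a right $\X$-approximation $f_M\colon X_M\to M$; since $\boldsymbol{\omega}\subseteq\X$, the induced morphism $\bar f_M$ in $\overline{\A}$ remains a right $\overline{\X}$-approximation. For (a), given $\bar g\colon M\to N$ in $\overline{\A}$ lifted to $g\colon M\to N$ in $\A$, I would take a right $\boldsymbol{\omega}$-approximation $\pi\colon W\to N$ (which exists by contravariant finiteness of $\boldsymbol{\omega}$) and form the $\boldsymbol{\omega}$-epimorphism $(g,\pi)\colon M\oplus W\to N$. By (AB1) the kernel $K$ lies in $\A$, and the composite $K\to M\oplus W\to M$ descends to a weak kernel of $\bar g$ in $\overline{\A}$.

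For (c), I would prove the sharper assertion that the map $\overline{\X}(-,X_M)\to\overline{\A}(-,M)|_{\overline{\X}}$ induced by $\bar f_M$ is already an isomorphism, so this functor is projective and $\pd=0$. Surjectivity is immediate from $f_M$ being a right $\X$-approximation. For injectivity, suppose $g\colon X\to X_M$ with $X\in\X$ is such that $f_Mg=\beta\alpha$ factors through some $W\in\boldsymbol{\omega}$. Since $W\in\X$, the approximation property writes $\beta=f_Mc$, so $g-c\alpha$ factors through $Y_M$. A dimension-shift using the $\boldsymbol{\omega}$-resolution of $Y_M\in\widehat{\boldsymbol{\omega}}$ combined with the Ext-vanishing in (AB2) shows that any map $X\to Y_M$ lifts through the zeroth term of that resolution, so $g$ itself factors through $\boldsymbol{\omega}$.

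Condition (d) is the crux. Given $F\in\mod(\A/[\X])$ with presentation $\A(-,N)\xto{u_*}\A(-,M)\to F\to 0$ in $\mod\A$, the vanishing of $F$ on $\X$ forces $u$ to be an $\X$-epimorphism. Accounting for $[\overline{\X}](-,M)=\Im((\bar f_M)_*)$ when passing to $\mod\overline{\A}$, one obtains a presentation $\overline{\A}(-,N\oplus X_M)\xto{(\bar v)_*}\overline{\A}(-,M)\to F\to 0$ in $\mod\overline{\A}$, where $v=(u,f_M)$ is itself a right $\X$-approximation, hence an $\boldsymbol{\omega}$-epimorphism. By (AB1) its kernel $L$ lies in $\A$, and an argument analogous to (c) identifies $\ker((\bar v)_*)$ with the image of $\overline{\A}(-,L)\to\overline{\A}(-,N\oplus X_M)$. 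Iterating this syzygy analysis along further applications of (AB1) and (AB3), the finiteness of the $\boldsymbol{\omega}$-resolutions of the $\widehat{\boldsymbol{\omega}}$-terms in (AB3) should force termination after finitely many steps. The main technical difficulty I anticipate is checking that each iterated syzygy still fits into a presentation to which (AB1) and (AB3) can be reapplied, so that the resolution genuinely closes up. Once (a)--(d) are in hand, Theorem~\ref{mainthm:functor_category_version} delivers the desired triangle equivalence $\ds(\overline{\A})\xto{\sim}\ds(\overline{\X})$.
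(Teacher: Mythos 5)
Your overall strategy is the same as the paper's: reduce to Theorem~\ref{mainthm:functor_category_version} by checking weak-kernels, contravariant finiteness, condition (i) of Lemma~\ref{lem:Chen1} and condition (i) of Theorem~\ref{thm:Chen2} for the pair $\overline{\X}\subseteq\overline{\A}$. Your steps (a), (b), (c) are correct and essentially reproduce Proposition~\ref{prop:Serre} and Proposition~\ref{prop:adjoint} (via Lemma~\ref{lem:adjoint}). The problem is step (d), which you yourself identify as the crux and leave unresolved (``should force termination'', ``the main technical difficulty I anticipate\dots''). This is not a routine verification: it is the actual mathematical content of the theorem, occupying Propositions~\ref{prop:projective dimension}--\ref{prop:singular complete} of the paper, and your sketch does not contain the ideas needed to close it.

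Concretely, two things are missing. First, your identification of $\ker((\bar v)_*)$ with the image of $\overline{\A}(-,L)\to\overline{\A}(-,N\oplus X_M)$ is fine (it is the standard weak-kernel argument), but the resolution does not continue by ``taking kernels again'': the map $L\to N\oplus X_M$ is a monomorphism in $\C$, yet the kernel of $\overline{\A}(-,L)\to\overline{\A}(-,N\oplus X_M)$ is in general nonzero, and in the paper (Proposition~\ref{prop:key}) it is computed as the image of a snake-lemma connecting map from $\overline{\A}(-,\Omega_{\boldsymbol{\omega}}(M))$; the resolution then proceeds with $\boldsymbol{\omega}$-syzygies of all three terms in a period-three pattern, which requires the explicit $3\times 3$ diagram built from compatible right $\boldsymbol{\omega}$-approximations. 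Second, and more seriously, nothing in your setup makes this resolution terminate: the kernel $L$ of your chosen presentation is just some object of $\A$, and the $\widehat{\boldsymbol{\omega}}$-object $Y_M$ from (AB3) never actually appears in your complex (you only used $X_M$), so ``the finiteness of the $\boldsymbol{\omega}$-resolutions of the $\widehat{\boldsymbol{\omega}}$-terms'' has nothing to act on. The paper obtains termination by a two-step reduction: Lemma~\ref{lem:singular complete} first shows $\pd_{\A/[\X]}(F)<\infty$ (using that iterated $\X$-syzygies of any object of $\A$ eventually lie in $\X$, which itself needs (AB3) together with Lemma~\ref{lem:adjoint}), so by exactness of the inclusion $\mod(\A/[\X])\hookrightarrow\mod(\overline{\A})$ one may assume $F=\A/[\X](-,M)$; for such $F$ the (AB3) sequence $0\to Y_M\to X_M\to M$ is itself a three-term presentation, and applying Proposition~\ref{prop:key} with $\B=\boldsymbol{\omega}$ the terms $\overline{\A}(-,\Omega^n_{\boldsymbol{\omega}}(Y_M))$ vanish for $n\gg 0$ precisely because $Y_M\in\widehat{\boldsymbol{\omega}}$, truncating the resolution. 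This reduction and the key resolution are exactly what your proposal would still have to supply.
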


Typical examples satisfying Condition \ref{condition} come from cotilting theory.
Let us recall the notion of cotilting subcategories of $\C$.
For a subcategory $\X$ of $\C$, we denote by $^\perp \X$ the full subcategory of $\C$ of objects $M$ with $\Ext^i_\C(M,X)=0$ for any $i>0$ and $X\in\X$.

\begin{definition}
Let $\C$ be an abelian category with enough projectives.
A full subcategory $\T$ of $\C$ is called a \textit{cotilting subcategory of $\C$}, if it satisfies the following conditions:
\begin{itemize}
\item There exists an integer $n\in\mathbb{Z}_{\geq 0}$ such that $\id I\leq n$ for any $I\in\T$;
\item $\Ext^i_\C(I,J)=0$ for any $I, J\in\T$ and $i>0$;
\item For each $M\in{^\perp \T}$, there exists an exact sequence
$$0\to M\to I\to M'\to 0$$
with $I\in\T$ and $M'\in{^\perp \T}$.
\end{itemize}
We call an object $T\in\C$ a \textit{cotilting object} if $\add T$ is a cotilting subcategory of $\C$.
\end{definition}

The following result is immediate from Theorem \ref{mainthm:singular_equivalence_from_AB_approximation}.

\setcounter{maincor}{2}
\begin{maincor}[Corollary \ref{cor:singular_equivalence_from_cotilting}]
\label{maincor:singular_equivalence_from_cotilting}
Let $\A$ be an abelian category with enough projectives and $\T$ its contravariantly finite cotilting subcategory.
Then the canonical inclusion $\overline{^\perp \T}\hookrightarrow \overline{\A}$
induces a triangle equivalence $\ds(\overline{\A})\xto{\sim}\ds(\overline{^\perp \T})$.
\end{maincor}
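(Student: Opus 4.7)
The plan is to deduce Corollary~\ref{maincor:singular_equivalence_from_cotilting} from Theorem~\ref{mainthm:singular_equivalence_from_AB_approximation} by specializing Condition~\ref{condition} to the triple in which the ambient abelian category $\C$ and the middle category both equal $\A$, while $\X={^\perp\T}$ and $\boldsymbol{\omega}=\T$. The inclusion $\T\subseteq{^\perp\T}$ follows from the Ext-vanishing clause of the cotilting definition. Contravariant finiteness of $\T$ in $\A$ is a hypothesis of the corollary, so the task reduces to verifying (AB1)--(AB3) together with contravariant finiteness of $^\perp\T$ in $\A$.

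Conditions (AB1) and (AB2) are immediate: (AB1) is trivial since $\A$ is abelian and thus closed under kernels, and (AB2) is precisely the defining property of $^\perp\T$. For (AB3) I would invoke the classical Auslander-Buchweitz approximation theorem for the pair $(\X,\boldsymbol{\omega})=({^\perp\T},\T)$. The third cotilting axiom says exactly that $\T$ is a cogenerator of $^\perp\T$, and (AB2) provides the Ext-vanishing $\Ext^{>0}_\A({^\perp\T},\T)=0$; the only remaining input is the identity $\A=\widehat{^\perp\T}$. To establish this, fix $M\in\A$ and take a projective resolution $\cdots\to P_1\to P_0\to M\to 0$. Projectives lie in $^\perp\T$, and a dimension shift using the uniform bound $\id J\le n$ for $J\in\T$ gives
$$
\Ext^i_\A(\syz^n M,J)\;\cong\;\Ext^{i+n}_\A(M,J)\;=\;0\qquad(i>0,\ J\in\T),
$$
so $\syz^n M\in{^\perp\T}$ and $M$ admits a $^\perp\T$-resolution of length at most $n$. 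Auslander-Buchweitz then produces a short exact sequence
$$
0\to Y_M\to X_M\xto{f} M\to 0
$$
with $X_M\in{^\perp\T}$ and $Y_M\in\widehat{\T}$. Finally, iterated dimension shifting along a finite $\T$-resolution of $Y_M$ together with (AB2) yields $\Ext^1_\A(X',Y_M)=0$ for every $X'\in{^\perp\T}$, which forces $f$ to be a right $^\perp\T$-approximation. This establishes (AB3) and simultaneously the contravariant finiteness of $^\perp\T$ in $\A$.

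With every clause of Condition~\ref{condition} verified, Theorem~\ref{mainthm:singular_equivalence_from_AB_approximation} supplies the desired triangle equivalence $\ds(\overline{\A})\xto{\sim}\ds(\overline{^\perp\T})$ induced by the canonical inclusion. The main technical content is the Auslander-Buchweitz approximation construction together with the identification $\A=\widehat{^\perp\T}$; once these are in place, the remaining clauses of Condition~\ref{condition} are either general abelian-category trivia or direct restatements of the cotilting axioms, so no further obstacle is expected.
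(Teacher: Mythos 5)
Your proposal is correct and follows essentially the same route as the paper: set $\X={^\perp\T}$, $\boldsymbol{\omega}=\T$, note (AB1) and (AB2) are immediate, verify (AB3) via the classical Auslander--Buchweitz approximation theorem together with the syzygy dimension-shift showing $\Omega^n M\in{^\perp\T}$ (hence $\widehat{^\perp\T}=\A$), and then apply Theorem~\ref{thm:singular_equivalence_from_AB_approximation}. The only difference is that you spell out why the approximation map is a right $^\perp\T$-approximation (the vanishing $\Ext^1_\A(X',Y_M)=0$ for $Y_M\in\widehat{\T}$), a point the paper leaves implicit via Lemma~\ref{lem:adjoint}.
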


As examples of Corollary \ref{maincor:singular_equivalence_from_cotilting}, we have the followings: 

\begin{example}\label{ex:CP}
\begin{itemize}
\item[(a)] Let $\Lambda $ be a finite dimensional $k$-algebra over a field $k$ and $T$ a cotilting $\Lambda $-module.
Then the canonical inclusion $\overline{^\perp T}\hookrightarrow\overline{\mod}\Lambda $
induces a triangle equivalence $\ds(\overline{\mod}\Lambda )\xto{\sim}\ds(\overline{^\perp T})$.
\item[(b)] Let $R$ be a commutative Cohen-Macaulay ring with a canonical $R$-module $\omega$
and $\mathsf{CM}R$ the full subcategory of maximal Cohen-Macaulay $R$-modules. 
Then the canonical inclusion $\overline{\mathsf{CM}}R\hookrightarrow\overline{\mod} R$
induces a triangle equivalence $\ds(\overline{\mod} R)\xto{\sim}\ds(\overline{\mathsf{CM}}R)$.
\end{itemize}
\end{example}

Theorem \ref{mainthm:singular_equivalence_from_AB_approximation} also provides an alternative proof for Matsui-Takahashi's theorem \cite[Thm. 5.4(3)]{MT} (Corollary \ref{cor:MT}):
For an Iwanaga-Gorenstein ring $\Lambda $, the canonical inclusion $\underline{\mathsf{CM}}\Lambda \hookrightarrow \underline{\mod}\Lambda $ induces a triangle equivalence $\ds(\underline{\mod}\Lambda )\xto{\sim}\ds(\underline{\mathsf{CM}}\Lambda )$.

\subsection*{Notation and convention}
Throughout the paper all categories and functors are assumed to be additive. 
The set of morphisms $M\rightarrow N$ in a category $\A$ is denoted by $\A(M,N)$.
Morphisms are composed from right-to-left.
Let $\X$ be a subcategory of $\A$.
We denote by $\A/[\X]$ the ideal quotient category of $\A$ modulo
the ideal $[\X]$ of $\A$ consisting of all morphisms which factor through an object in $\X$.
For each $M\in\A$, we denote by $\add M$ the full subcategory consisting of direct summands of a finite direct sum of $M$
and we abbreviate $\A/[M]$ to indicate $\A/[\add M]$.

The word ring and algebra always mean ring with a unit and finite dimensional algebra over a field $k$, respectively.
Let $A$ be a ring.
The symbol $\mod A$ denotes the category of finitely presented right $A$-modules.
We denote by $\Hom_A(M,N)$ the morphism-set from $M$ to $N$ instead of $(\mod A)(M,N)$.
The full subcategory of projective (resp. injective) modules in $\mod A$ will be denoted by $\proj A$ (resp. $\inj A$).
The projective (resp. injective) dimension of right $A$-module $M$ will be denoted by $\pd_A(M)$ (resp. $\id_A(M)$).

\section{A functor category version of Chen's theorem}\label{sect:2}
The aim of this section to provide a sufficient condition for an additive category $\A$ and its subcategory $\X$ so that the canonical inclusion $\X\hookrightarrow\A$ induces a triangle equivalence $\ds(\A)\simeq \ds(\X)$, which generalizes Xiao-Wu Chen's theorem.

The category $\mod\A$ is not necessarily abelian,
however, if every morphism in $\A$ has weak-kernels, then $\mod\A$ is abelian (\cite[Thm. 1.4]{Fre}).
Since we are interested in the case that $\mod\A$ is abelian,
\textit{throughout this section, let $\A$ be an additive category with weak-kernels and $\X$ its contravariantly finite full subcategory.}
Then, the canonical functor $Q:\mod\A\to\mod\X$ induces an equivalence
\begin{equation}\label{Serre3}
\frac{\mod\A}{\mod(\A/[\X])}\xto{\sim}\mod\X.
\end{equation}
Moreover, by \cite[Thm. 3.2]{Miy}, it induces a triangle equivalence
$$\frac{\Db(\mod\A)}{\mathsf{D}^{\textnormal{b}}_{\A/{[\X]}}(\mod\A)}\xto{\sim}\Db(\mod\X).$$
Then we have the following commutative diagram
$$
\xymatrix{
\mod(\A/[\X])\ar@{^{(}->}[d]\ar@{^{(}->}[r]&\mod\A\ar@{^{(}->}[d]\ar[r]^{Q}&\mod\X\ar[d]\\
\mathsf{D}^{\textnormal{b}}_{\A/{[\X]}}(\mod\A)\ar@{^{(}->}[r]&\Db(\mod\A)\ar[r]^{Q'}&\Db(\mod\X)
}
$$
where the arrows of the shape $\hookrightarrow$ denote canonical inclusions, and $Q'$ is the functor induced from $Q$.
Note that $\mathsf{D}^{\textnormal{b}}_{\A/[\X]}(\mod\A)$ is the thick subcategory of $\mathsf{D}^{\textnormal{b}}(\mod\A)$ containing $\mod(\A/[\X])$.
The following lemma gives a natural sufficient condition so that the canonical functor $\Db(\mod\A)\to\Db(\mod\X)$ induces a triangle functor $\ds(\A)\to\ds(\X)$.

\begin{lemma}\label{lem:Chen1}
The following conditions are equivalent:
\begin{itemize}
\item[\textnormal{(i)}] $\pd_\X(\A(-,M)|_\X)<\infty$ for any $M\in\A$;
\item[\textnormal{(ii)}]The canonical functor $Q':\Db(\mod\A)\to\Db(\mod\X)$ restricts to $Q':\Kb(\proj\A)\to\Kb(\proj\X)$.
\end{itemize}
If this is the case, we have an induced triangle functor $\bar{Q}:\ds(\A)\to\ds(\X)$.
\end{lemma}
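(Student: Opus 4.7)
The plan is to use that $\mod\A$ has enough projectives given by the representables $\A(-,M)$ with $M\in\A$, and similarly $\mod\X$ has enough projectives $\X(-,X)=\A(-,X)|_{\X}$ with $X\in\X$. Since evaluation at any object of $\X$ is exact on $\mod\A$, the restriction functor $Q:\mod\A\to\mod\X$ is exact, so it extends termwise to a triangle functor $Q':\Db(\mod\A)\to\Db(\mod\X)$; on representables one has $Q(\A(-,M))=\A(-,M)|_{\X}$. I would also use the standard identification of $\Kb(\proj\X)$, viewed inside $\Db(\mod\X)$, with the thick subcategory of objects of finite projective dimension in $\mod\X$ (and the analogous statement for $\A$).

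For (i)$\Rightarrow$(ii): any $P^{\bullet}\in\Kb(\proj\A)$ is, up to isomorphism, a bounded complex each of whose terms is a summand of a finite direct sum of representables $\A(-,M_j)$ with $M_j\in\A$, so $Q'(P^{\bullet})$ is a bounded complex each of whose terms is a summand of a finite direct sum of functors $\A(-,M_j)|_{\X}$. By (i) each $\A(-,M_j)|_{\X}$ has finite projective dimension in $\mod\X$, and the class of finite-pd objects is closed under finite direct sums and direct summands, so every term of $Q'(P^{\bullet})$ has finite pd; splicing termwise finite projective resolutions into a bounded double complex and totalising then produces a bounded projective resolution of $Q'(P^{\bullet})$, giving $Q'(P^{\bullet})\in\Kb(\proj\X)$. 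Conversely for (ii)$\Rightarrow$(i), apply $Q'$ to the stalk complex $\A(-,M)$ in degree $0$, which lies in $\Kb(\proj\A)$; its image $\A(-,M)|_{\X}$ must then lie in $\Kb(\proj\X)\subseteq\Db(\mod\X)$, which is precisely the assertion that $\A(-,M)|_{\X}$ has finite projective dimension in $\mod\X$.

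The final sentence follows formally from the universal property of Verdier quotients: writing $\ds(\A)=\Db(\mod\A)/\Kb(\proj\A)$ and $\ds(\X)=\Db(\mod\X)/\Kb(\proj\X)$, condition (ii) ensures that the composition $\Db(\mod\A)\xto{Q'}\Db(\mod\X)\to\ds(\X)$ annihilates $\Kb(\proj\A)$, and therefore factors uniquely through a triangle functor $\bar{Q}:\ds(\A)\to\ds(\X)$. The argument is essentially formal, and the only non-tautological point, which I expect to be the main technical check, is the closure of the class of finite-pd objects under bounded-complex formation; I would handle it either by invoking the thickness of $\Kb(\proj\X)$ inside $\Db(\mod\X)$ or by the explicit totalisation of a termwise finite projective resolution indicated above.
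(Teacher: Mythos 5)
Your proof is correct and follows essentially the same route as the paper: identify the projectives of $\mod\A$ as summands of representables, note $Q'$ sends $\A(-,M)$ to $\A(-,M)|_{\X}$, translate (i) into $Q'(\proj\A)\subseteq\Kb(\proj\X)$ using the identification of $\Kb(\proj\X)$ with the finite-pd objects, and obtain $\bar{Q}$ from the universal property of the Verdier quotient. You merely spell out (via thickness of $\Kb(\proj\X)$ or totalisation) the closure step that the paper's one-line "if and only if" chain leaves implicit.
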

\begin{proof}
(i) $\Leftrightarrow$ (ii): Since the functor $Q':\Db(\mod\A)\to\Db(\mod\X)$ restricts to $Q'|_{\mod\A}=Q:\mod\A\to\mod\X$, the condition (i) holds if and only if $Q'(\proj\A)\subseteq \Kb(\proj\X)$ if and only if the condition (ii) holds.

The latter statement follows from the universality of the Verdier quotient.
\end{proof}

Since our aim is to compare the singularity categories $\ds(\A)$ and $\ds(\X)$,
it is natural to assume that the equivalent conditions in Lemma \ref{lem:Chen1} are satisfied.
Our main result gives a necessary and sufficient condition so that the canonical inclusion $\X\hookrightarrow\A$ induces a triangle equivalence $\ds(\A)\xto{\sim}\ds(\X)$.

\begin{theorem}\label{thm:Chen2}
We assume that $\pd_\X(\A(-,M)|_\X) <\infty$ for any $M\in\A$.
Then the following conditions are equivalent:
\begin{itemize}
\item[\textnormal{(i)}] $\pd_{\A}(F)<\infty$ for any $F\in\mod(\A/[\X])$;
\item[\textnormal{(ii)}] The induced functor $\bar{Q}:\ds(\A)\to\ds(\X)$ is a triangle equivalence.
\end{itemize}
\end{theorem}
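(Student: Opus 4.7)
The plan is to reduce both (i) and (ii) to the single inclusion of thick subcategories
$$\mathsf{D}^{\textnormal{b}}_{\A/[\X]}(\mod\A)\ \subseteq\ \Kb(\proj\A)$$
of $\Db(\mod\A)$, via the preimage $\V^{\ast}:=(Q')^{-1}(\Kb(\proj\X))$. First I would show
$$\V^{\ast}\ =\ \thick\bigl(\Kb(\proj\A)\cup \mathsf{D}^{\textnormal{b}}_{\A/[\X]}(\mod\A)\bigr).$$
By the equivalence (\ref{left_recollement2}), $Q'$ is a Verdier quotient with kernel $\mathsf{D}^{\textnormal{b}}_{\A/[\X]}(\mod\A)$, so thick subcategories of $\Db(\mod\X)$ correspond bijectively to thick subcategories of $\Db(\mod\A)$ containing this kernel. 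Every representable projective $\X(-,X)$ with $X\in\X$ equals $Q'(\A(-,X))$ with $\A(-,X)\in\Kb(\proj\A)$; combined with the standing hypothesis $\pd_\X(\A(-,M)|_\X)<\infty$, which ensures $\Kb(\proj\A)\subseteq \V^{\ast}$, the correspondence yields the displayed equality.

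Second, I would realize $\bar Q$ via $\V^{\ast}$. Taking a further Verdier quotient of $\Db(\mod\X)\simeq \Db(\mod\A)/\mathsf{D}^{\textnormal{b}}_{\A/[\X]}(\mod\A)$ by $\Kb(\proj\X)$ produces an equivalence $\ds(\X)\simeq \Db(\mod\A)/\V^{\ast}$, under which $\bar Q$ identifies with the canonical functor $\ds(\A)=\Db(\mod\A)/\Kb(\proj\A)\to \Db(\mod\A)/\V^{\ast}$. A canonical functor between Verdier quotients of the same triangulated category by nested thick subcategories is an equivalence if and only if the two thick subcategories coincide, so (ii) amounts exactly to $\mathsf{D}^{\textnormal{b}}_{\A/[\X]}(\mod\A)\subseteq \Kb(\proj\A)$ in view of the previous paragraph.

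Finally I would equate (i) with this inclusion. The direction $(\Leftarrow)$ is immediate on viewing any $F\in\mod(\A/[\X])$ as a stalk complex in $\mathsf{D}^{\textnormal{b}}_{\A/[\X]}(\mod\A)$. For $(\Rightarrow)$, given $X\in \mathsf{D}^{\textnormal{b}}_{\A/[\X]}(\mod\A)$ I would induct on the number of nonzero cohomologies using the soft truncation triangle $\sigma^{<n}X\to X\to \sigma^{\geq n}X\to$; each cohomology of $X$ lies in $\mod(\A/[\X])$ and has finite $\A$-projective dimension by (i), so both outer terms belong to $\Kb(\proj\A)$ by induction and hence so does $X$. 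The main hurdle is the identification of $\V^{\ast}$ in the first step via the Verdier correspondence; the rest is a formal use of quotient universal properties together with a standard truncation argument.
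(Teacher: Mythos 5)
Your argument is correct and is essentially the paper's: both proofs identify $\ds(\X)$ with the Verdier quotient of $\Db(\mod\A)$ by $\thick(\Kb(\proj\A)\cup\Db_{\A/[\X]}(\mod\A))$ and thereby reduce (i) and (ii) alike to the single inclusion $\Db_{\A/[\X]}(\mod\A)\subseteq\Kb(\proj\A)$, checked on stalk complexes and by truncation. The only packaging difference is that where you pass through the preimage $(Q')^{-1}(\Kb(\proj\X))$ and Verdier's bijection of thick subcategories, the paper establishes the symmetric double-quotient identity of Proposition \ref{prop:Verdier} directly via the $*$-operation and then observes that $\bar{Q}$ is an equivalence iff $\tri(Q_2\U)=0$, i.e. iff $\U\subseteq\V$.
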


To prove Theorem \ref{thm:Chen2}, we firstly show Proposition \ref{prop:Verdier} in a more general framework:
Let $\T$ be a triangulated category with a translation $[1]$.
For a class $\S$ of objects in $\T$, we denote by $\tri \S$ the smallest triangulated full subcategory of $\T$ containing $\S$.
For two classes $\U$ and $\V$ of objects in
$\T$, we denote by $\U *\V$ the class of objects X occurring in a triangle $U\to X\to V\to U[1]$
with $U\in\U$ and $V\in\V$. Note that the operation $*$ is associative by the octahedral axiom.

\begin{proposition}\label{prop:Verdier}
Let $\U$ and $\V$ be triangulated full subcategories of $\T$ and consider the Verdier quotients with respect to them:
\begin{equation*}
\U\to\T\xto{Q_1}\T/\U\textnormal{\ \ \ and\ \ \ }\V\to\T\xto{Q_2}\T/\V.
\end{equation*}
Then, there exist natural triangle equivalences
$$\frac{\T/\U}{\tri(Q_1\V)}\simeq\frac{\T}{\tri(\U, \V)}\simeq \frac{\T/\V}{\tri(Q_2\U)},$$
where $Q_1\V$ is the full subcategory of $\T/\U$ consisting of objects isomorphic to $Q_1V$ for some $V\in\V$,
and the symbol $Q_2\U$ is used in a similar meaning.
\end{proposition}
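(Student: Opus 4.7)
The plan is to introduce the intermediate triangulated subcategory $\mathcal{W} := \tri(\U,\V)$ and apply, twice, the standard ``isomorphism theorem'' for Verdier quotients: for triangulated subcategories $\U \subseteq \mathcal{W}$ of $\T$, the quotient $(\T/\U)/(\mathcal{W}/\U)$ is naturally equivalent to $\T/\mathcal{W}$. The substantive task is to identify the image $\mathcal{W}/\U$ with $\tri(Q_1\V)$ inside $\T/\U$, and symmetrically $\mathcal{W}/\V$ with $\tri(Q_2\U)$ inside $\T/\V$.

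First I would verify the identification $\mathcal{W}/\U = \tri(Q_1\V)$. The inclusion $\supseteq$ is immediate: $Q_1(\mathcal{W})$ is a triangulated subcategory of $\T/\U$ containing $Q_1\V$, so it contains $\tri(Q_1\V)$. For $\subseteq$, observe that the preimage $Q_1^{-1}\bigl(\tri(Q_1\V)\bigr)\subseteq \T$ is a triangulated subcategory containing both $\U$ (since $Q_1\U=0\in \tri(Q_1\V)$) and $\V$; by minimality of $\mathcal{W}=\tri(\U,\V)$ it contains $\mathcal{W}$, so $Q_1(\mathcal{W})\subseteq \tri(Q_1\V)$.

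Next I would produce mutually inverse triangle equivalences between $\T/\mathcal{W}$ and $(\T/\U)/\tri(Q_1\V)$ using the universal property of Verdier localisation. Write $P_1\colon \T/\U\to (\T/\U)/\tri(Q_1\V)$ and $Q\colon \T\to \T/\mathcal{W}$ for the localisation functors. The composite $P_1\circ Q_1$ annihilates every object of $\U$ and every object of $\V$; since the kernel of a triangle functor is a triangulated subcategory, it annihilates $\mathcal{W}$, and therefore factors uniquely as $G\circ Q$ for a triangle functor $G$. Conversely, $Q$ kills $\U$ and thus factors through a unique $\overline{Q}\colon \T/\U\to \T/\mathcal{W}$; the functor $\overline{Q}$ then kills $Q_1\V$, hence the whole of $\tri(Q_1\V)$, and so factors as $H\circ P_1$ for a unique $H$. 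The uniqueness clauses in the two universal properties force $GH=\id$ and $HG=\id$, proving the first equivalence; the symmetric argument, interchanging the roles of $\U$ and $\V$, yields the second.

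The main potential obstacle is purely bookkeeping: making sure the universal property is applied to the correct composite and spelling out carefully that the class of objects annihilated by a triangle functor really is triangulated, so that killing $\U\cup\V$ entails killing $\mathcal{W}=\tri(\U,\V)$. Beyond this there are no genuine difficulties, and no extra hypotheses on $\U$, $\V$, or $\T$ are required.
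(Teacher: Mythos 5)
Your proposal is correct, and its skeleton coincides with the paper's: both insert the intermediate subcategory $\mathcal{W}=\tri(\U,\V)$ and reduce to the Verdier ``third isomorphism theorem'' $(\T/\U)/(\mathcal{W}/\U)\simeq\T/\mathcal{W}$, so everything hinges on identifying $\mathcal{W}/\U$, i.e.\ the essential image $Q_1\mathcal{W}$ inside $\T/\U$, with $\tri(Q_1\V)$. Where you genuinely differ is in how that identification is established. The paper computes it explicitly through the $*$-operation: setting $\S=\U\cup\V$ and using $\tri(\U,\V)=\bigcup_{n\geq 0}\S^{*n}$, it pushes the filtration through $Q_1$ and uses $Q_1\U=0$ to obtain $Q_1(\tri(\U,\V))=\bigcup_{n\geq 0}(Q_1\V)^{*n}=\tri(Q_1\V)$. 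You instead argue by closure properties -- the preimage $Q_1^{-1}\bigl(\tri(Q_1\V)\bigr)$ is triangulated and contains $\U$ and $\V$, giving one inclusion, while the other rests on $Q_1\mathcal{W}$ being a triangulated subcategory of $\T/\U$ -- and you then re-derive the isomorphism theorem from the universal property of localisation rather than citing it, which is careful and correct. One caveat deserves emphasis: your ``immediate'' claim that the image $Q_1\mathcal{W}$ is triangulated is false for images of triangle functors in general; it holds here only because $\U\subseteq\mathcal{W}$, so every morphism $Q_1W_1\to Q_1W_2$ is represented by a roof whose apex lies again in $\mathcal{W}$, whence cones of such morphisms remain in the image. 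This is exactly the same lifting-of-extensions point that the paper's equality $Q_1\bigl(\bigcup_{n}\S^{*n}\bigr)=\bigcup_{n}(Q_1\S)^{*n}$ leaves tacit in its $\supseteq$ direction, so neither route has an advantage in rigour; yours trades the explicit filtration for abstract nonsense, the paper's makes the generation process visible.
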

\begin{proof}
We shall show an equality $\tri(Q_1\V)=\tri(\U,\V)/\U$, where $\tri(\U,\V)$ denotes the smallest triangulated full subcategory of $\T$ containing $\U$ and $\V$.
We set $\S :=\U\cup \V$. Obviously we have $Q_1\S=Q_1\V$.
Since $\tri (\U,\V)=\bigcup_{n\geq 0}\S^{*n}$, we have the following equalities:
\begin{equation*}
\tri(\U,\V)/\U = Q_1\Bigg(\bigcup_{n\geq 0}\S^{*n}\Bigg)
= \bigcup_{n\geq 0}(Q_1\S)^{*n}= \bigcup_{n\geq 0}(Q_1\V)^{*n} = \tri(Q_1\V).
\end{equation*}
Hence we have a desired triangle equivalence
$
\displaystyle\frac{\T/\U}{\tri(Q_1\V)}=\frac{\T/\U}{\tri(\U,\V)/\U}\xto{\sim}\frac{\T}{\tri(\U,\V)}.
$
\end{proof}

Now we are ready to prove Theorem \ref{thm:Chen2}.

\begin{proof}[Proof of Theorem \ref{thm:Chen2}]
Apply Proposition \ref{prop:Verdier} for $\T= \Db (\mod \A), \U=\Db_{\mod(\A/[\X])} (\mod \A)$ and $\V= \Kb(\proj \A)$. 
Then $\T/\U = \Db (\mod \X)$ and $\T/\V= \ds (\A)$. 
The assumption gives $Q_1\V= \Kb (\proj \X)$. 
Hence $\frac{\T/\U}{Q_1\V} = \ds (\X)$. 
Thus we have a triangle equivalence $\ds (\X) \simeq  \frac{\ds(\A)}{\tri (Q_2\U)}$. 
This shows the condition (i) is equivalent to $Q_2\U=0$, namely $\U \subset \V$, which is nothing but the condition (ii).
\end{proof}

We end this section with recovering the following Chen's theorem as a special case of Theorem \ref{thm:Chen2} and Lemma \ref{lem:Chen1}.

\begin{example}\cite[Thm. 1.3]{Che} \textnormal{(see also \cite[Thm. 5.2]{PSS}, \cite[Prop. 3.3]{KY})}\label{thm:singular_equivalence_for_idem}
Let $\Lambda $ be a Noetherian ring and $e$ its idempotent.
Assume that $\pd_{e\Lambda e}(\Lambda e)<\infty$.
Then the canonical inclusion $e\Lambda e\hookrightarrow \Lambda $ induces a triangle functor $\bar{Q}:\ds(\Lambda )\to\ds(e\Lambda e)$,
and the following are equivalent:
\begin{itemize}
\item [(i)] $\pd_\Lambda (M)<\infty$ for any $M\in\mod(\Lambda /\Lambda e\Lambda )$;
\item [(ii)] The induced functor $\bar{Q}: \ds(\Lambda )\xto{\sim}\ds(e\Lambda e)$ is a triangle equivalence.
\end{itemize}
\end{example}

\section{Sufficient conditions for singular equivalence}\label{sect:3}
The aim of this section is to construct a singular equivalence from our generalized Auslander-Buchweitz condition (Condition \ref{condition}).
First we introduce some terminology.
Let $\C$ denote an abelian category
and let $\C\supseteq \A\supseteq \B$ be a sequence of full subcategories of $\C$.
We call the kernel of a $\B$-epimorphism the \textit{$\B$-epikernel}, for short.
\textit{We assume that $\A$ is closed under $\B$-epikernels and $\B$ is contravariantly finite in $\A$.}
Then the ideal-quotient category $\A/[\B]$ admits weak-kernels.
In fact,
for a morphism $\alpha :M\to L$ of $\A$,
we obtain its weak-kernel as follows:
We take a right $\B$-approximation $\beta :B_L\to L$ of $L$,
and consider an induced exact sequence
$$0\to N\xto{(\gamma\ \delta)}M\oplus B_L\xto{\tiny{\begin{pmatrix}
\alpha  \\
\beta
\end{pmatrix}}} L$$
in $\C$.
Since $\A$ is closed under $\B$-epikernels and the morphism $\tiny{\begin{pmatrix}
\alpha  \\
\beta
\end{pmatrix}}$ is an $\B$-epimorphism,  we have $N\in\A$.
It is basic that the morphism $\gamma$ is a weak-kernel of $\alpha$ in $\A/[\B]$.

\subsection{Singular equivalences from Auslander-Buchweitz approximation}\label{subsect:proof}
In this subsection, we give a proof of  the following main theorem.

\begin{theorem}
\label{thm:singular_equivalence_from_AB_approximation}
Under Condition \ref{condition},
the canonical inclusion $\overline{\X}\hookrightarrow\overline{\A}$
induces a triangle equivalence $\ds(\overline{\A})\xto{\sim}\ds(\overline{\X})$.
\end{theorem}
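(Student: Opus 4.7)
My plan is to apply Theorem~\ref{thm:Chen2} to the pair $(\overline{\A}, \overline{\X})$, which reduces the statement to verifying three things: (a) that $\overline{\A}$ has weak-kernels and $\overline{\X}$ is contravariantly finite in $\overline{\A}$; (b) condition~(i) of Theorem~\ref{thm:Chen2}, i.e. $\pd_{\overline{\X}}(\overline{\A}(-,M)|_{\overline{\X}}) < \infty$ for every $M\in\overline{\A}$; and (c) condition~(ii), i.e. $\pd_{\overline{\A}}(F) < \infty$ for every $F \in \mod(\overline{\A}/[\overline{\X}])$.

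For (a), the weak-kernel construction at the opening of Section~\ref{sect:3} applies directly: (AB1) ensures that $\A$ is closed under $\boldsymbol{\omega}$-epikernels, and $\boldsymbol{\omega}$ is contravariantly finite in $\A$ by hypothesis; right $\X$-approximations descend to right $\overline{\X}$-approximations since $\boldsymbol{\omega}\subseteq\X$. For (b) I would establish the stronger statement that $\overline{\A}(-,M)|_{\overline{\X}}$ is actually representable. Given $M\in\A$ with (AB3) approximation $0\to Y_M\xto{\iota}X_M\xto{f}M$, the map $\overline{\X}(-,X_M)\to\overline{\A}(-,M)|_{\overline{\X}}$ induced by $f$ is surjective by the approximation property. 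Its kernel identifies, through a direct $[\boldsymbol{\omega}]$-factorization analysis (using $\boldsymbol{\omega}\subseteq\X$ so that morphisms $W\to M$ with $W\in\boldsymbol{\omega}$ lift through $f$), with the image of $\overline{\A}(-,Y_M)|_{\overline{\X}}$. That image vanishes: combining (AB2) with the $\boldsymbol{\omega}$-resolution of $Y_M\in\widehat{\boldsymbol{\omega}}$, a dimension-shift argument gives $\Ext^j_{\C}(X,Y_M)=0$ for $X\in\X$ and $j>0$, so every morphism $X\to Y_M$ lifts through the surjection $W_0\twoheadrightarrow Y_M$ and therefore factors through $\boldsymbol{\omega}$. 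Thus $\overline{\A}(-,M)|_{\overline{\X}}\cong\overline{\X}(-,X_M)$ is projective of dimension $0$.

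The main obstacle is (c). Since $\boldsymbol{\omega}\subseteq\X$, one has $\mod(\overline{\A}/[\overline{\X}])=\mod(\A/[\X])$; taking a projective presentation in the latter reduces the goal to bounding $\pd_{\overline{\A}}((\A/[\X])(-,M))$ for each $M\in\A$. Tracking $[\overline{\X}]$- and $[\boldsymbol{\omega}]$-factorizations through the (AB3) approximation would yield the four-term exact sequence
\begin{equation*}
\overline{\A}(-, Y_M) \xto{\iota_*} \overline{\A}(-, X_M) \xto{f_*} \overline{\A}(-, M) \to (\A/[\X])(-, M) \to 0
\end{equation*}
in $\mod\overline{\A}$, whose three left-hand terms are projective. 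It remains to bound $\pd_{\overline{\A}}(\ker\iota_*)$, which I would attack by induction on the length $n$ of the $\boldsymbol{\omega}$-resolution of $Y_M\in\widehat{\boldsymbol{\omega}}$. The base case $n=0$ gives $Y_M\in\boldsymbol{\omega}$, whence $\overline{\A}(-,Y_M)=0$ and $\pd((\A/[\X])(-,M))\leq 1$. The inductive step would use the short exact sequence $0\to Y_M'\to W_0\to Y_M\to 0$ from the $\boldsymbol{\omega}$-resolution, together with the weak-kernel construction in $\overline{\A}$, to rewrite $\ker\iota_*$ via representables at objects whose $\widehat{\boldsymbol{\omega}}$-depth is strictly smaller. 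The subtle point is to verify cleanly that this iteration terminates, which I expect will rely on the $\widehat{\boldsymbol{\omega}}$ clause of (AB3) forcing a strict decrease in resolution length at each stage.
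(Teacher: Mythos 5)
Your overall strategy --- apply Theorem~\ref{thm:Chen2} to the pair $(\overline{\A},\overline{\X})$ --- is the right one, and your steps (a) and (b) are correct; (b) in particular is exactly the content of Proposition~\ref{prop:adjoint} together with Lemma~\ref{lem:adjoint}. The genuine gap is in (c), and it occurs already at the reduction step. From a projective presentation $(\A/[\X])(-,M_1)\to(\A/[\X])(-,M_0)\to F\to 0$ over $\A/[\X]$ you cannot conclude $\pd_{\overline{\A}}(F)<\infty$ from finiteness of $\pd_{\overline{\A}}\bigl((\A/[\X])(-,M_i)\bigr)$: the image of the presentation map sits in a short exact sequence whose kernel is an uncontrolled submodule of $(\A/[\X])(-,M_1)$, and finite projective dimension does not pass from a two-term presentation to its cokernel. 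What the reduction actually requires is a \emph{finite projective resolution} of $F$ over $\A/[\X]$, i.e.\ $\pd_{\A/[\X]}(F)<\infty$; only then does exactness of the inclusion $\mod(\A/[\X])\hookrightarrow\mod\overline{\A}$ let you induct down the resolution and reduce to the representables $(\A/[\X])(-,M)$. This finiteness is a substantive intermediate result, not a formality: in the paper it is Lemma~\ref{lem:singular complete}, proved by combining the iterated-syzygy resolution of Proposition~\ref{prop:key} (applied with $\B=\X$) with the fact that $\Omega^n_\X(L)\in\X$ for some $n$ and every $L\in\A$, which in turn uses (AB3) and Lemma~\ref{lem:adjoint} to see that the maps in an $\boldsymbol{\omega}$-resolution of $Y\in\widehat{\boldsymbol{\omega}}$ are right $\X$-approximations of their images. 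None of this can be bypassed: applying the syzygy construction with $\B=\boldsymbol{\omega}$ directly to an arbitrary $F$ produces a resolution with no reason to terminate, since $\boldsymbol{\omega}$-syzygies of arbitrary objects of $\A$ need not reach $\boldsymbol{\omega}$.

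For the representable case your four-term sequence is correct (it is exact except that $\iota_*$ need not be injective), but the part you defer with ``I expect'' is precisely the technical heart of the argument. In the paper this is Proposition~\ref{prop:key} applied with $\B=\boldsymbol{\omega}$ to $0\to Y_M\to X_M\xto{f}M$: one builds a ladder of right $\boldsymbol{\omega}$-approximations (the approximation of $X_M$ being assembled from those of $Y_M$ and $M$, using that $f$ is an $\boldsymbol{\omega}$-epimorphism), applies the snake lemma, checks that the connecting map descends modulo $[\boldsymbol{\omega}]$, and thereby identifies $\ker\iota_*$ as a quotient of the projective $\overline{\A}(-,\Omega_{\boldsymbol{\omega}}(M))$, splicing into a resolution by the modules $\overline{\A}(-,\Omega^k_{\boldsymbol{\omega}}(Y_M))$, $\overline{\A}(-,\Omega^k_{\boldsymbol{\omega}}(X_M))$, $\overline{\A}(-,\Omega^k_{\boldsymbol{\omega}}(M))$. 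Termination then comes from $\Omega^n_{\boldsymbol{\omega}}(Y_M)\in\boldsymbol{\omega}$ for $Y_M\in\widehat{\boldsymbol{\omega}}$, not from an induction on the four-term sequence alone; note that the syzygies of $X_M$ and $M$ never disappear, so a naive induction on the $\widehat{\boldsymbol{\omega}}$-length of $Y_M$ that tries to shorten all three columns will not close. Until $\ker\iota_*$ is identified and this three-column resolution is constructed and truncated, step (c), and hence the proof, is incomplete.
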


Let $\C$ be an abelian category with enough projectives and consider a sequence
$\A\supseteq\X\supseteq\boldsymbol{\omega}$ of full subcategories in $\C$
such that $\X$ and $\boldsymbol{\omega}$ are contravariantly finite in $\A$.
We always assume (AB1) in Condition \ref{condition}.

\begin{proposition}\label{prop:Serre}
The ideal-quotient $\overline{\A}$ admits weak-kernels and $\overline{\X}$ is its contravariantly finite full subcategory.
Moreover, the canonical inclusion $\overline{\X}\hookrightarrow \overline{\A}$ induces the following equivalence
$$\frac{\mod\overline{\A}}{\mod(\A/[\X])}\xto{\sim}\mod\overline{\X}.$$
\end{proposition}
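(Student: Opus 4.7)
The plan is to verify the hypotheses of the general Serre-quotient equivalence (\ref{left_recollement}) applied to the pair $(\overline{\A},\overline{\X})$, and then to identify the resulting denominator with $\mod(\A/[\X])$.

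First I would check that $\overline{\A}$ admits weak-kernels. This is precisely the content of the general fact recorded in the paragraph just before the proposition: condition (AB1) says that $\A$ is closed under $\boldsymbol{\omega}$-epikernels, and $\boldsymbol{\omega}$ is contravariantly finite in $\A$ by hypothesis, so the weak-kernel construction given there (forming the kernel of $\bigl(\begin{smallmatrix}\alpha\\ \beta\end{smallmatrix}\bigr):M\oplus B_L\to L$, where $\beta$ is a right $\boldsymbol{\omega}$-approximation of $L$) produces weak-kernels in $\overline{\A}=\A/[\boldsymbol{\omega}]$. Next I would verify that $\overline{\X}$ is contravariantly finite in $\overline{\A}$. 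Given $M\in\A$, choose a right $\X$-approximation $\pi:X\to M$ in $\A$, which exists since $\X$ is contravariantly finite in $\A$. For any morphism $\overline{f}:\overline{X'}\to\overline{M}$ in $\overline{\A}$ with $X'\in\X$, lift it to a representative $f:X'\to M$ in $\A$ and factor $f=\pi\circ g$ through $\pi$; projecting back to $\overline{\A}$ shows that $\overline{\pi}:\overline{X}\to\overline{M}$ is a right $\overline{\X}$-approximation.

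Having verified these hypotheses, the equivalence (\ref{left_recollement}) applied to $(\overline{\A},\overline{\X})$ yields
$$\frac{\mod\overline{\A}}{\mod(\overline{\A}/[\overline{\X}])}\xto{\sim}\mod\overline{\X}.$$
It then remains to identify $\overline{\A}/[\overline{\X}]$ with $\A/[\X]$. Since $\boldsymbol{\omega}\subseteq\X$, a morphism in $\A$ factors through some object of $\boldsymbol{\omega}$ only if it factors through an object of $\X$, so the ideal $[\boldsymbol{\omega}]$ of $\A$ is contained in $[\X]$. Conversely, a morphism of $\A$ has image zero in $\overline{\A}/[\overline{\X}]$ precisely when it factors, modulo $[\boldsymbol{\omega}]$, through some $X\in\X$, i.e. when it belongs to $[\X]$. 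The universal property of the quotient then provides a canonical isomorphism of additive categories $\overline{\A}/[\overline{\X}]\cong\A/[\X]$, giving $\mod(\overline{\A}/[\overline{\X}])=\mod(\A/[\X])$ and hence the claimed equivalence.

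The only mildly delicate point is the last identification of iterated ideal quotients; the weak-kernel statement and the descent of right $\X$-approximations to right $\overline{\X}$-approximations are formal consequences of (AB1), the contravariant finiteness assumptions, and the inclusion $\boldsymbol{\omega}\subseteq\X$, so no serious obstacle is expected.
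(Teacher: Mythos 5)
Your proposal is correct and follows essentially the same route as the paper: establish weak-kernels in $\overline{\A}$ from (AB1) via the construction preceding the proposition, note that contravariant finiteness of $\X$ in $\A$ descends to $\overline{\X}$ in $\overline{\A}$, identify $\overline{\A}/[\overline{\X}]$ with $\A/[\X]$ using $\boldsymbol{\omega}\subseteq\X$, and then invoke the Serre-quotient equivalence (\ref{Serre3}). The paper merely asserts the two descent steps and the ideal-quotient identification, which you spell out correctly.
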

\begin{proof}
Since $\A$ is closed under $\boldsymbol{\omega}$-epikernels, $\overline{\A}$ admits weak-kernels.
Since $\X$ is contravariantly finite in $\A$, so is $\overline{\X}$ in $\overline{\A}$.
Note that there exists an equivalence $\A/[\X]\simeq \overline{\A}/[\overline{\X}]$.
By (\ref{Serre3}), we have a desired equivalence.
\end{proof}

To prove that the inclusion $\overline{\X}\hookrightarrow\overline{\A}$
induces a triangle functor $\ds(\overline{\A})\to\ds(\overline{\X})$,
we shall check a sufficient condition given in Lemma \ref{lem:Chen1}.

\begin{lemma}\label{lem:adjoint}
Assume (AB2) and (AB3). Let $X\in\X$ be given.
Then,
\begin{itemize}
\item[\textnormal{(a)}] One has $\Ext^i_\C(X,I)=0$ for any $I\in\widehat{\boldsymbol{\omega}}$ and $i>0$.
\item[\textnormal{(b)}] Every morphism $f:X\to I$ with $I\in\widehat{\boldsymbol{\omega}}$ factors through an object in $\boldsymbol{\omega}$.
\end{itemize}
\end{lemma}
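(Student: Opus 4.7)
The plan is to prove parts (a) and (b) simultaneously by induction on the minimal length $n$ of a $\boldsymbol{\omega}$-resolution of $I\in\widehat{\boldsymbol{\omega}}$. This is the natural approach because $\widehat{\boldsymbol{\omega}}$ is itself defined by such resolutions, and condition (AB2) is exactly the base case of (a).

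First I would dispose of the base case $n=0$, where $I\in\boldsymbol{\omega}$: (a) is immediate from (AB2), and (b) is trivial since $f\colon X\to I$ factors through the identity on $I\in\boldsymbol{\omega}$. For the inductive step, given a resolution $0\to \omega_n\to\cdots\to\omega_0\to I\to 0$, I would split off the surjection at the right end to obtain a short exact sequence
\[
0\to K\to \omega_0\to I\to 0
\]
in $\C$, where $K=\ker(\omega_0\to I)$ fits into the truncated resolution $0\to\omega_n\to\cdots\to\omega_1\to K\to 0$; hence $K\in\widehat{\boldsymbol{\omega}}$ with a strictly shorter $\boldsymbol{\omega}$-resolution, and the inductive hypothesis applies to $K$.

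Next, applying $\Hom_\C(X,-)$ to this short exact sequence yields a long exact sequence. For (a), the segment
\[
\Ext^i_\C(X,\omega_0)\to \Ext^i_\C(X,I)\to \Ext^{i+1}_\C(X,K)
\]
sandwiches $\Ext^i_\C(X,I)$ between two groups that vanish for $i\geq 1$: the left one by (AB2), the right one by the inductive hypothesis for $K$. For (b), the preceding segment
\[
\Hom_\C(X,\omega_0)\to \Hom_\C(X,I)\to \Ext^1_\C(X,K)
\]
has vanishing right term by the inductive hypothesis for $K$, so the restriction map is surjective; lifting $f\colon X\to I$ gives a factorization through $\omega_0\in\boldsymbol{\omega}$.

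I do not anticipate a genuine obstacle here; the proof is a standard dimension-shifting argument, and the only point requiring a moment's thought is that the truncation $0\to\omega_n\to\cdots\to\omega_1\to K\to 0$ is still exact (which is immediate since a resolution remains exact after breaking off the rightmost term), so that $K$ indeed lies in $\widehat{\boldsymbol{\omega}}$ with shorter length. Notice also that condition (AB3) is not invoked at all in this lemma; it will be used elsewhere in the section. The argument uses only (AB2) together with the inductive structure of $\widehat{\boldsymbol{\omega}}$.
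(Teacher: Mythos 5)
Your argument is correct and is essentially the paper's own approach: the paper likewise breaks off a short exact sequence $0\to I'\to W\to I\to 0$ with $W\in\boldsymbol{\omega}$, $I'\in\widehat{\boldsymbol{\omega}}$ and deduces (b) from the vanishing of $\Ext^1_\C(X,I')$, leaving the routine dimension-shifting induction for (a) (which you carry out explicitly, correctly noting that only (AB2) is used) to the reader.
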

\begin{proof}
We only show the assertion (b).
Since $I\in\widehat{\boldsymbol{\omega}}$, there exists an exact sequence $0\to I'\to W\to I\to 0$ with $W\in\boldsymbol{\omega}$ and $I'\in\widehat{\boldsymbol{\omega}}$.
Applying $\C(X,-)$, by (a), we conclude that $f$ factors through $W$.
\end{proof}

\begin{proposition}\label{prop:adjoint}
Assume (AB2) and (AB3).
Then the canonical inclusion $\textnormal{inc}:\overline{\X}\hookrightarrow \overline{\A}$ admits a right adjoint $R$.
Moreover, we have $\pd_{\overline{\X}}(\overline{\A}(-,M)|_{\overline{\X}})=0$ for any $M\in\A$.
\end{proposition}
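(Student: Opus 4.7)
The plan is to construct the right adjoint $R$ explicitly using the approximation sequence from (AB3). Given $M\in\A$, set $R(M):=X_M$ viewed as an object of $\overline{\X}$, where $0\to Y_M\to X_M\xto{f}M$ is the sequence supplied by (AB3); note that $X_M\in\X$ since $f$ is a right $\X$-approximation. To define $R$ on morphisms $\phi\colon M\to M'$, I would use the approximation property of $f':X_{M'}\to M'$ to lift $\phi f$ to some $\phi'\colon X_M\to X_{M'}$, and then argue that $\phi'$ is unique modulo $[\boldsymbol{\omega}]$ (so $R(\phi)$ is a well-defined morphism in $\overline{\X}$); uniqueness reduces to showing that every map $X_M\to X_{M'}$ which vanishes after post-composing with $f'$ factors through $\boldsymbol{\omega}$, which is exactly the argument needed for the adjunction bijection below.

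The heart of the proof is the natural bijection $\overline{\X}(X,R(M))\xto{\sim}\overline{\A}(X,M)$ for $X\in\X$, $M\in\A$, given by post-composition with $f$. Surjectivity is immediate from $f$ being a right $\X$-approximation. The main obstacle is showing this map is injective in the quotient: suppose $g\colon X\to X_M$ satisfies $fg=ba$ for some factorization $X\xto{a}W\xto{b}M$ with $W\in\boldsymbol{\omega}$. Since $W\in\boldsymbol{\omega}\subseteq\X$, the approximation property of $f$ yields $c\colon W\to X_M$ with $fc=b$, and then $f(g-ca)=0$. As $Y_M=\ker f$ (in $\C$), the morphism $g-ca$ factors as $X\xto{k}Y_M\hookrightarrow X_M$. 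Now $Y_M\in\widehat{\boldsymbol{\omega}}$ and $X\in\X$, so Lemma \ref{lem:adjoint}(b) says $k$ factors through an object of $\boldsymbol{\omega}$; hence $g-ca\in[\boldsymbol{\omega}]$, and therefore $g\in[\boldsymbol{\omega}]$. This proves injectivity, and naturality in both variables is routine.

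For the final assertion, the established adjunction gives a natural isomorphism
$$\overline{\A}(-,M)|_{\overline{\X}}\cong \overline{\X}(-,R(M))=\overline{\X}(-,X_M)$$
in $\mod\overline{\X}$. Since $X_M\in\X$, the right-hand side is a representable functor on $\overline{\X}$, hence projective in $\mod\overline{\X}$. Consequently $\pd_{\overline{\X}}(\overline{\A}(-,M)|_{\overline{\X}})=0$, as required.
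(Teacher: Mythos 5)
Your proposal is correct and follows essentially the same route as the paper: construct $R(M)=X_M$ from the (AB3) sequence, prove the bijection $\overline{\X}(X,X_M)\cong\overline{\A}(X,M)$ with injectivity obtained by lifting the $\boldsymbol{\omega}$-factorization through the approximation and then invoking Lemma \ref{lem:adjoint}(b) on the resulting map into $Y_M\in\widehat{\boldsymbol{\omega}}$, and deduce the projective-dimension claim from representability. The only cosmetic difference is that the paper gets functoriality of $R$ directly from the Yoneda lemma rather than by your explicit lifting of morphisms, which amounts to the same argument.
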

\begin{proof}
The proof is similar to one given in \cite[Ch. V Prop. 1.2]{BR}, but our situation is slightly different from that in loc. cit. So we include a detailed  proof.
By (AB3), for each $M\in\A$,
there exists an exact sequence in $\A$
$$0\to Y_M\to X_M\xto{\alpha} M$$
with $\alpha$ a right $\X$-approximation of $M$ and $Y_M\in\widehat{\boldsymbol{\omega}}$.
We shall show that the morphism $\overline{\X}(X,X_M)\xto{\alpha\circ -}\overline{\A}(X,M)$ is a functorial isomorphism in $X\in\X$.
Its surjectivity is clear, since $\alpha$ is a right $\X$-approximation.
To show its injectivity, take a morphism $h\in\X(X,X_M)$ such that $\alpha\circ h$ factors through an object $I$ of $\boldsymbol{\omega}$.
Thus we have the following commutative diagram:
$$
\xymatrix@R=18pt{
&&X\ar[d]_h\ar[r]^{h'}&I\ar[d]^{h''}\\
0\ar[r]&Y_M\ar[r]&X_M\ar[r]^\alpha&M
}
$$
Since $\alpha$ is a right $\X$-approximation, there exists a morphism $\alpha':I\to X_M$ such that $\alpha\alpha'=h''$.
The morphism $h-\alpha' h'$ factors through $Y_M\in\widehat{\boldsymbol{\omega}}$.
By Lemma \ref{lem:adjoint}(ii), this implies that $h-\alpha' h'$ factors through $\boldsymbol{\omega}$.
Hence $h$ factors through $\boldsymbol{\omega}$.
By the Yoneda lemma, the assignment $M\mapsto X_M$ gives rise to a functor $R:\A\to\X$.
The bifunctorial isomorphism $\overline{\X}(X,R(M))\xto{\alpha\circ -}\overline{\A}(X,M)$ says the pair of functors $(\textnormal{inc},R)$ forms an adjoint pair.

The latter statement is obvious.
\end{proof}

\begin{proposition}\label{prop:projective dimension}
Let $\B$ be a contravariantly finite full subcategory of $\A$
and assume that $\A$ is closed under $\B$-epikernels.
Let $F\in\mod(\A/[\B])$ be given.
Then there exists an exact sequence
\begin{equation}\label{A-projective_resolution}
0\to N\xto{g} M\xto{f} L
\end{equation}
in $\A$ which satisfies the following conditions:
\begin{itemize}
\item[\textnormal{(a)}] The morphism $f$ is a $\B$-epimorphism;
\item[\textnormal{(b)}] The induces sequence
\begin{equation*}
0\to\A(-,N)\xto{g\circ -}\A(-,M)\xto{f\circ -}\A(-,L)\to F\to 0
\end{equation*}
is exact.
\end{itemize}
In particular, $\pd_\A(F)\leq 2$.
\end{proposition}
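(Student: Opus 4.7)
The plan is to build the desired length-two projective resolution of $F$ by lifting a finite presentation of $F$ over $\A/[\B]$ and then enlarging the presenting morphism into a $\B$-epimorphism via a right $\B$-approximation. Since $F\in\mod(\A/[\B])$, fix a presentation
$$(\A/[\B])(-,M_0)\xto{\bar{f}_0\circ -}(\A/[\B])(-,L)\to F\to 0.$$
Lift $\bar{f}_0$ to a morphism $f_0:M_0\to L$ in $\A$, choose a right $\B$-approximation $b:B_L\to L$, and set $M:=M_0\oplus B_L$ and $f:=\begin{pmatrix}f_0 & b\end{pmatrix}:M\to L$. Then $f$ is a $\B$-epimorphism because $b$ is, so the kernel $g:N\to M$ formed in $\C$ lies in $\A$ by the standing closure hypothesis. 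This yields the sequence of condition (a).

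For condition (b), left exactness of $\A(T,-)$ on the sequence $0\to N\to M\to L$ in $\C$ takes care of exactness at $\A(-,N)$ and $\A(-,M)$. The nontrivial point is exactness at $\A(-,L)$: given $\alpha:T\to L$ with $T\in\A$ in the kernel of $\A(-,L)\to F$, one has $\bar\alpha=\bar{f}_0\bar\beta$ in $\A/[\B]$ for some $\beta:T\to M_0$, hence $\alpha-f_0\beta=uv$ with $v:T\to B$, $u:B\to L$, and $B\in\B$. Since $b$ is a right $\B$-approximation, $u$ lifts to $u':B\to B_L$ with $bu'=u$, so
$$\alpha=f_0\beta+bu'v=f\circ\begin{pmatrix}\beta\\ u'v\end{pmatrix}$$
lies in the image of $f\circ -$. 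The reverse containment is automatic since $b$ factors through $B_L\in\B$, so the image of $f\circ -$ modulo $[\B]$ is contained in $\Im(\bar{f}_0\circ -)$ and therefore maps to $0$ in $F$. The surjectivity $\A(-,L)\to F$ is built into the presentation, and the bound $\pd_\A(F)\leq 2$ follows at once from the resulting projective resolution.

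The main obstacle is the exactness at $\A(-,L)$: one must promote the relation ``$\bar\alpha$ lies in $\Im(\bar{f}_0\circ -)$'' inside $\A/[\B]$ to an honest factorization of $\alpha$ through $M$ in $\A$. This is precisely the reason for enlarging $M_0$ to $M_0\oplus B_L$ via the right $\B$-approximation $b$, which absorbs the $[\B]$-error term $uv$ into the $B_L$-summand; once this lifting step is in place, conditions (a), (b), and the dimension bound all follow without further work.
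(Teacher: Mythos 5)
Your proof is correct and follows essentially the same route as the paper: produce a presentation $\A(-,M)\to\A(-,L)\to F\to 0$ over $\A$ whose presenting morphism is a $\B$-epimorphism, use closure under $\B$-epikernels to get $N\in\A$, and conclude by left-exactness of the Yoneda embedding. The only (cosmetic) difference is that you build this presentation explicitly by lifting an $\A/[\B]$-presentation and adjoining a right $\B$-approximation $B_L\to L$, whereas the paper first observes abstractly that $F\in\mod\A$ (via such approximations) and then notes that for \emph{any} $\A$-presentation the map $f$ is automatically a $\B$-epimorphism because $F$ vanishes on $\B$.
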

\begin{proof}
First $F$ is a finitely presented $\A$-module.
Indeed,
a right $\B$-approximation $B_Y\to Y$ of any $Y\in\A$ induces a projective presentation
$$\A(-,B_Y)\to\A(-,Y)\to\A/[\B](-,Y)\to 0$$
of the $\A$-module $\A/[\B](-,Y)$.
This shows that $\A/[\B](-,Y)$ belongs to $\mod\A$, hence so does $F$.

Thus we have a projective presentation
$\A(-,M)\xto{f\circ -}\A(-,L)\to F\to 0$
of the $\A$-module $F$.
Since $F$ vanishes on $\B$, the induced morphism $f$ is a $\B$-epimorphism.
Thus we have an exact sequence $0\to N\xto{g} M\xto{f}L$ in $\A$.
Applying the Yoneda embedding, we have a projective resolution $0\to\A(-,N)\to\A(-,M)\to\A(-,L)\to F\to 0$ of the $\A$-module $F$.
\end{proof}

Let $M\in\A$ and $f:B_M\to M$ be a right $\B$-approximation of $M$.
Then we write $\Omega_\B(M):=\Ker f$.
We define $\Omega_\B^n(M)$ inductively for $n\geq 1$.
We prove the following key-proposition which generalizes the well-known result given in \cite[Prop. 4.1, 4.2]{AR74} and \cite[Prop. 1.2]{AR96}.
The proof is similar but a bit different from the original ones.

\begin{proposition}\label{prop:key}
For $F\in\mod(\A/[\B])$,
the exact sequence (\ref{A-projective_resolution}) in Proposition \ref{prop:projective dimension} induces a projective resolution
\begin{equation}\label{A/X-projective_resolution}
\xymatrix@R=4pt@M=6pt{
\cdots\ar[r]&\A/[\B](-,\Omega^2_\B(N))\ar[r]^{\Omega^2_\B g\circ -}\ar[r]&\A/[\B](-,\Omega^2_\B(M))\ar[r]^{\Omega^2_\B f\circ -}&\A/[\B](-,\Omega^2_\B(L))&&\\
\ar[r]&\A/[\B](-,\Omega_\B(N))\ar[r]^{\Omega_\B g\circ -}\ar[r]&\A/[\B](-,\Omega_\B(M))\ar[r]^{\Omega_\B f\circ -}&\A/[\B](-,\Omega_\B(L))&&\\
\ar[r]&\A/[\B](-,N)\ar[r]^{g\circ -}&\A/[\B](-,M)\ar[r]^{f\circ -}&\A/[\B](-,L)\ar[r]&F\ar[r]&0
}
\end{equation}
of the $\A/[\B]$-module $F$.
\end{proposition}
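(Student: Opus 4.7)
My plan is to extend the $\A$-projective presentation of Proposition~\ref{prop:projective dimension} three terms at a time, iterating a horseshoe construction for $\B$-approximations together with a carefully chosen connecting morphism. First I would verify exactness of $\A/[\B](-,N)\xto{g\circ -}\A/[\B](-,M)\xto{f\circ -}\A/[\B](-,L)\to F\to 0$. Fixing $Y\in\A$ and writing $\A/[\B](Y,X)=\A(Y,X)/[\B](Y,X)$, both exactness claims come down to the following device: any $\B$-morphism $B\to L$ factors through the $\B$-approximation $\pi_L:B_L\to L$, and since $f$ is a $\B$-epi, $\pi_L$ itself lifts to some $\tilde\pi_L:B_L\to M$ with $f\tilde\pi_L=\pi_L$. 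This yields $[\B](Y,L)\subseteq\Im(f\circ -)$ and, by a parallel chase, that any $\phi\in\A(Y,M)$ with $f\phi\in[\B](Y,L)$ agrees with some $g\psi$ modulo $[\B]$.

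\textbf{Step 2 (Horseshoe and syzygy triple).} Next I would choose right $\B$-approximations $\pi_N:B_N\to N$ and $\pi_L:B_L\to L$ together with the lift $\tilde\pi_L$ above, and show that $(g\pi_N,\tilde\pi_L):B_N\oplus B_L\to M$ is a right $\B$-approximation of $M$: given $b:B\to M$ with $B\in\B$, lift $fb$ through $\pi_L$ to some $h:B\to B_L$, observe that $b-\tilde\pi_L h$ factors through $N$, then lift through $\pi_N$. Passing to kernels of the vertical maps in the resulting $3\times 2$ diagram yields a sequence $0\to\Omega_\B N\to\Omega_\B M\to\Omega_\B L$ in $\A$, and an analogous chase verifies that its last map is again a $\B$-epi; thus $(\Omega_\B N,\Omega_\B M,\Omega_\B L)$ satisfies the same hypotheses as $(N,M,L)$.

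\textbf{Step 3 (Connecting morphism).} I would define $\partial:\Omega_\B L\to N$ as the restriction of $\tilde\pi_L$ to $\Omega_\B L=\Ker\pi_L$; it lands in $N=\Ker f$ and satisfies $g\circ\partial=0$ in $\A/[\B]$ since $g\partial$ factors through $B_L\in\B$. For the key equality $\Ker(g\circ -)=\Im(\partial\circ -)$ in $\mod(\A/[\B])$, given $\psi:Y\to N$ with $g\psi\in[\B](Y,M)$, I would write $g\psi=\pi_M\sigma$ for some $\sigma:Y\to B_M$ (using that $\pi_M:B_M\to M$ is a $\B$-approximation), lift $f\pi_M$ through $\pi_L$ to some $\rho:B_M\to B_L$, use $fg\psi=0$ to factor $\rho\sigma$ through $\Omega_\B L$ via some $\tau:Y\to\Omega_\B L$, then observe that $\pi_M-\tilde\pi_L\rho$ factors through $g$ and deduce $\psi-\partial\tau\in[\B](Y,N)$ using monomorphicity of $g$.

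Iterating Steps~2--3 with $(N,M,L)$ replaced by $(\Omega_\B N,\Omega_\B M,\Omega_\B L)$ extends the resolution three more terms, and induction on syzygy depth delivers the full (\ref{A/X-projective_resolution}). The main technical obstacle will be Step~2: because $\B$-approximations are not required to be surjective, the middle vertical of the horseshoe diagram cannot be taken as an arbitrary $\B$-approximation of $M$ and must be assembled by hand as $(g\pi_N,\tilde\pi_L)$, balancing the $\B$-epi property of $f$ against the approximation property of $\pi_N$. The ambiguity of $\Omega_\B(-)$ up to $\B$-summands is harmless in $\mod(\A/[\B])$, where representable functors see only the class modulo such summands.
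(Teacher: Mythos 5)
Your Steps 1--3 are correct and in fact reproduce the paper's construction: the assembled approximation $(g\pi_N,\tilde\pi_L):B_N\oplus B_L\to M$ is exactly the map $\alpha_M=\begin{pmatrix}\beta\\ g\alpha_N\end{pmatrix}$ in the paper, which then applies the Yoneda embedding and the Snake Lemma where you run explicit chases. However, your outline omits one exactness verification, and it is precisely the one the Snake Lemma is there to deliver: exactness of the spliced complex at the junction terms $\A/[\B](-,\Omega_\B(L))$ (and likewise at $\A/[\B](-,\Omega^i_\B(L))$ for all $i$). Concretely, you must show that the composite $\partial\circ\Omega_\B f$ lies in $[\B]$ and that $\Ker(\partial\circ -)=\Im(\Omega_\B f\circ -)$ in $\mod(\A/[\B])$. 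Step 3 only establishes $\Im(\partial\circ -)=\Ker(g\circ -)$, i.e.\ exactness at $\A/[\B](-,N)$, and iterating Steps 1--3 on the triple $(\Omega_\B N,\Omega_\B M,\Omega_\B L)$ only identifies the cokernel $F'$ of $\Omega_\B f\circ -$ and gives exactness at $\A/[\B](-,\Omega_\B M)$ and $\A/[\B](-,\Omega_\B N)$; it never shows that the map $F'\to \A/[\B](-,N)$ induced by $\partial$ is injective, which is equivalent to the missing exactness at $\A/[\B](-,\Omega_\B L)$. So ``induction on syzygy depth'' as stated does not close the argument.

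The gap is fixable within your framework by one more chase of the same kind, using the explicit description $\Omega_\B M=\Ker\bigl((g\pi_N,\tilde\pi_L)\bigr)\subseteq B_N\oplus B_L$, with $\Omega_\B f$ the restriction of the projection to $B_L$. For the complex property: $g\,\partial\,\Omega_\B f=\tilde\pi_L\,\mathrm{pr}_{B_L}\,\iota_M=-g\pi_N\,\mathrm{pr}_{B_N}\,\iota_M$, so $\partial\,\Omega_\B f=-\pi_N\,\mathrm{pr}_{B_N}\,\iota_M$ factors through $B_N\in\B$. For exactness: if $u:Y\to\Omega_\B L$ satisfies $\partial u\in[\B](Y,N)$, write $\partial u=\pi_N d\gamma$ via the approximation $\pi_N$; then $(-d\gamma,\iota_L u):Y\to B_N\oplus B_L$ is killed by $(g\pi_N,\tilde\pi_L)$, hence factors through $\Omega_\B M$, and its image under $\Omega_\B f$ is $u$ on the nose. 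Alternatively, you can avoid the issue entirely by following the paper: the connecting morphism is produced by the Snake Lemma as a map $\A(-,\Omega_\B L)\to\A/[\B](-,N)$ whose kernel is, by that lemma, exactly the image of $\A(-,\Omega_\B M)$, so the identification of $F'$ with $\Ker(g\circ -)$ and the junction exactness come for free when splicing.
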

\begin{proof}
For the sequence (\ref{A-projective_resolution}),
we take right $\B$-approximations $\alpha_L:B_L\to L$ and $\alpha_N:B_N\to N$.
Since the morphism $f$ is $\B$-epimorphism, we have a morphism $\beta: B_L\to M$ such that $\alpha_L=f\circ \beta$.
The induced morphism $\alpha_M:=\tiny{\begin{pmatrix}
\beta  \\
g\alpha_N
\end{pmatrix}}:B_M:=B_L\oplus B_N\to M$ is a right $\B$-approximation of $M$.
Since $\A$ is closed under $\B$-epikernels, we have the following commutative diagram in $\A$:
\begin{equation*}
\xymatrix@R=14pt{
&0\ar[d]&0\ar[d]&0\ar[d]&\\
0\ar[r]&\Omega_\B(N)\ar[r]^{\Omega_\B g}\ar[d]&\Omega_\B(M)\ar[r]^{\Omega_\B f}\ar[d]&\Omega_\B(L)\ar[d]&\\
0\ar[r]&B_N\ar[d]^{\alpha_N}\ar[r]&B_M\ar[d]^{\alpha_M}\ar[r]&B_L\ar[d]^{\alpha_L}\ar[r]&0\\
0\ar[r]&N\ar[r]^g&M\ar[r]^f&L&
}
\end{equation*}
where all columns and rows are exact, and the middle row splits.
Applying the Yoneda embedding and the Snake Lemma, we have the following commutative diagram in $\mod\A$.
\begin{equation*}
\xymatrix@R=14pt{
&0\ar[d]&0\ar[d]&0\ar[d]&&\\
0\ar[r]&\A(-,\Omega_\B(N))\ar[d]\ar[r]&\A(-,\Omega_\B(M))\ar[d]\ar[r]&\A(-,\Omega_\B(L))\ar[d]&&\\
0\ar[r]&\A(-,B_N)\ar[d]^{\alpha_N\circ -}\ar[r]&\A(-,B_M)\ar[d]^{\alpha_M\circ -}\ar[r]&\A(-,B_L)\ar[d]^{\alpha_L\circ -}\ar[r]&0&\\
0\ar[r]&\A(-,N)\ar[d]\ar[r]^{g\circ -}&\A(-,M)\ar[d]\ar[r]^{f\circ -}&\A(-,L)\ar[d]\ar[r]&F\ar@{=}[d]\ar[r]&0\\
&\A/[\B](-,N)\ar[d]\ar[r]&\A/[\B](-,M)\ar[d]\ar[r]&\A/[\B](-,L)\ar[d]\ar[r]&F\ar[r]&0\\
&0&0&0&&
}
\end{equation*}
In particular, we have an exact sequence
$$
\xymatrix@C=20pt@R=3pt{
0\ar[r]&\A(-,\Omega_\B(N))\ar[r]&\A(-,\Omega_\B(M))\ar[r]&\A(-,\Omega_\B(L))&&\\
\ar[r]^{\delta\ \ \ \ \ \ \ }&\A/[\B](-,N)\ar[r]&\A/[\B](-,M)\ar[r]&\A/[\B](-,L)\ar[r]&F\ar[r]&0
}$$
in $\mod\A$.
We have an exact sequence $0\to\Omega_\B(N)\to\Omega_\B(M)\xto{\Omega_\B f}\Omega_\B(L)$ such that $\Omega_\B f$ is an $\B$-epimorphism.
Inductively, we have a desired projective resolution of the $\A/[\B]$-module $F$.
\end{proof}

\begin{lemma}\label{lem:singular complete}
Under Condition \ref{condition},
\begin{itemize}
\item[\textnormal{(a)}] For any $L\in\A$, there exists $n\geq 0$ such that $\Omega^n_\X(L)\in\X$.
\item[\textnormal{(b)}] For each $F\in\mod(\A/[\X])$, we have $\pd_{\A/[\X]}(F)<\infty$.
\end{itemize}
\end{lemma}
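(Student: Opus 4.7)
For part (a), the plan is to use (AB3) to reduce to a claim about $\widehat{\boldsymbol{\omega}}$ and then induct on the length of a $\boldsymbol{\omega}$-resolution. Given $L \in \A$, (AB3) provides an exact sequence $0 \to Y_L \to X_L \xto{f} L$ in $\A$ in which $f$ is a right $\X$-approximation of $L$ and $Y_L \in \widehat{\boldsymbol{\omega}}$; thus, with the choice $\Omega_\X(L) := Y_L$, it suffices to establish the auxiliary claim that whenever $Z \in \widehat{\boldsymbol{\omega}}$ admits a $\boldsymbol{\omega}$-resolution of length $n$, one has $\Omega^n_\X(Z) \in \X$.

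I would prove the auxiliary claim by induction on $n$. The base case $n = 0$ is immediate since $\boldsymbol{\omega} \subseteq \X$. For the inductive step, split off the top of a $\boldsymbol{\omega}$-resolution $0 \to W_n \to \cdots \to W_0 \to Z \to 0$ to obtain a short exact sequence $0 \to Z' \to W_0 \to Z \to 0$ with $W_0 \in \boldsymbol{\omega}$ and $Z' \in \widehat{\boldsymbol{\omega}}$ of resolution length $n - 1$. The key observation is that $W_0 \to Z$ is automatically a right $\X$-approximation: applying $\C(X, -)$ for any $X \in \X$ gives surjectivity precisely because $\Ext^1_\C(X, Z') = 0$ by Lemma \ref{lem:adjoint}(a). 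Hence one may take $\Omega_\X(Z) := Z'$, and the inductive hypothesis applied to $Z'$ then yields $\Omega^{n-1}_\X(Z') \in \X$, so $\Omega^n_\X(Z) \in \X$. Note that (AB1) guarantees the successive syzygies remain in $\A$, so the iteration of $\Omega_\X$ is legitimate.

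For part (b), the plan is to feed part (a) into the explicit projective resolution of Proposition \ref{prop:key}. Given $F \in \mod(\A/[\X])$, Proposition \ref{prop:projective dimension} furnishes an exact sequence $0 \to N \to M \to L$ in $\A$, and Proposition \ref{prop:key} then promotes this to an $(\A/[\X])$-projective resolution of $F$ whose terms at degrees $3k$, $3k+1$, $3k+2$ are $\A/[\X](-, \Omega^k_\X(L))$, $\A/[\X](-, \Omega^k_\X(M))$, $\A/[\X](-, \Omega^k_\X(N))$. Applying (a) separately to $L$, $M$, $N$ and taking the maximum of the three resulting exponents yields an $n$ for which all three of $\Omega^n_\X(L)$, $\Omega^n_\X(M)$, $\Omega^n_\X(N)$ lie in $\X$; since every morphism into an object of $\X$ factors through $\X$, the representable functor $\A/[\X](-, X)$ vanishes for any $X \in \X$, so the resolution terminates and $\pd_{\A/[\X]}(F) \leq 3n$. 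The main obstacle throughout is the inductive step in (a) --- the identification of $W_0 \to Z$ as a right $\X$-approximation of $Z$ --- which rests entirely on the $\Ext$-vanishing of Lemma \ref{lem:adjoint}(a) (and hence on (AB2)); once that is in hand, (b) is essentially bookkeeping on the resolution of Proposition \ref{prop:key}.
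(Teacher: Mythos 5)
Your argument is correct and essentially the paper's own: in (a) you make the same use of (AB3) followed by the Ext-vanishing of Lemma \ref{lem:adjoint}(a) to see that the maps in an $\boldsymbol{\omega}$-resolution of the kernel $Y_L$ are right $\X$-approximations (the paper simply writes out the concatenated resolution $0\to I_n\to\cdots\to I_1\to X_0\to L$ instead of phrasing it as an induction on the resolution length), and in (b) you truncate the resolution of Proposition \ref{prop:key} with $\B=\X$ exactly as the paper does. The only cosmetic difference is that the paper truncates as soon as $\A/[\X](-,\Omega^n_\X(L))=0$, so it invokes (a) only for $L$ and gets $\pd_{\A/[\X]}(F)\le 3n-1$, while you apply (a) to all of $L$, $M$, $N$ and take the maximum; both versions work.
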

\begin{proof}
(a) For an object $L\in\A$, due to (AB3), we get an exact sequence
$$0\to Y\to X_0\xto{f_0}L$$
such that $f_0$ is a right $\X$-approximation of $L$ and $Y\in\widehat{\boldsymbol{\omega}}$.
Since $Y\in\widehat{\boldsymbol{\omega}}$, we get an exact sequence
$$0\to I_{n}\to I_{n-1}\xto{f_{n-1}}\cdots \to I_1\xto{f_1}X_0\xto{f_0} L$$
with $I_i\in\boldsymbol{\omega}$ for $1\leq i\leq n$.
By Lemma \ref{lem:adjoint}, each morphism
$f_i:I_i\to\Im f_i$ is a right $\X$-approximation of $\Im f_i$ for each $1\leq i\leq n$ hence $I_{n}=\Omega_\X^n(L)\in\X$.

(b) We consider the projective resolution (\ref{A/X-projective_resolution}) of the $\A/[\X]$-module $F$ given in Proposition \ref{prop:key} by setting $\B:=\X$.
Then the assertion follows from (a), since $\A/[\X](-,\Omega_\X^n(L))=0$.
\end{proof}

\begin{proposition}\label{prop:singular complete}
Under Condition \ref{condition},
for each $F\in\mod(\A/[\X])$, one has
$\pd_{\overline{\A}}(F)<\infty$.
\end{proposition}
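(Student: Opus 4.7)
The plan is a two-stage reduction followed by an induction on an Auslander--Buchweitz complexity.

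First, Lemma \ref{lem:singular complete}(b) supplies a finite projective resolution of $F$ in $\mod(\A/[\X])$; since $\mod(\A/[\X])\hookrightarrow\mod\overline{\A}$ is a Serre (hence exact) inclusion by Proposition \ref{prop:Serre}, this resolution remains exact in $\mod\overline{\A}$. A standard dimension-shifting (horseshoe) argument therefore reduces the claim to showing $\pd_{\overline{\A}}(\A/[\X](-,P))<\infty$ for every $P\in\A$.

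Second, I would fix $P\in\A$ and apply (AB3) to obtain $0\to Y_P\to X_P\xto{\alpha} P$ in $\A$ with $Y_P\in\widehat{\boldsymbol{\omega}}\cap\A$ and $\alpha$ a right $\X$-approximation. Since $\alpha$ is in particular a $\boldsymbol{\omega}$-epimorphism, a short computation using the right-$\X$-approximation property produces an exact sequence in $\mod\overline{\A}$
$$\overline{\A}(-,Y_P)\xto{\iota\circ-}\overline{\A}(-,X_P)\xto{\alpha\circ-}\overline{\A}(-,P)\to \A/[\X](-,P)\to 0,$$
where exactness at the two middle terms follows because every $g:Y\to X_P$ with $\alpha g\in[\boldsymbol{\omega}]$ differs, modulo $[\boldsymbol{\omega}]$, from a morphism factoring through $Y_P$. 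By Lemma \ref{lem:adjoint}(b), every morphism from $\overline{\X}$ to $Y_P\in\widehat{\boldsymbol{\omega}}$ factors through $\boldsymbol{\omega}$, so $\overline{\A}(-,Y_P)=\A/[\X](-,Y_P)$ is already projective in $\mod\overline{\A}$, and the kernel $K:=\Ker(\iota\circ-)$ belongs to $\mod(\A/[\X])$.

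Third, I would induct on the length $\ell$ of a minimal $\boldsymbol{\omega}$-resolution of $Y_P\in\widehat{\boldsymbol{\omega}}$. The base case $\ell=0$ (i.e., $Y_P\in\boldsymbol{\omega}$) forces $\overline{\A}(-,Y_P)=0$, whence $K=0$ and $\pd_{\overline{\A}}(\A/[\X](-,P))\leq 1$. For the inductive step, take $0\to Y'\to W_0\to Y_P\to 0$ with $W_0\in\boldsymbol{\omega}$ and $Y'=\Omega_{\boldsymbol{\omega}}(Y_P)\in\widehat{\boldsymbol{\omega}}$ of length $\ell-1$; the plan is to present $K$ as (a piece of) a module of the same shape, built from $Y'$ in place of $Y_P$, so that the inductive hypothesis yields $\pd_{\overline{\A}}(K)<\infty$ and hence $\pd_{\overline{\A}}(\A/[\X](-,P))<\infty$.

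The hard part will be the third step, specifically giving a clean recursive description of the kernel $K$. I expect this to boil down to a diagram chase assembling the (AB3) sequences for $P$, $Y_P$ and $Y'$ together with the $\boldsymbol{\omega}$-resolution of $Y_P$, and exploiting the $\Ext$-vanishing $\Ext^i_{\C}(\X,\widehat{\boldsymbol{\omega}})=0$ from Lemma \ref{lem:adjoint}(a) to identify kernels of the relevant maps between representable $\overline{\A}$-modules.
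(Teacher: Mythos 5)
Your first two steps are correct and coincide with the first half of the paper's own argument: the reduction to $F=\A/[\X](-,P)$ via Lemma \ref{lem:singular complete}(b) and exactness of the inclusion is exactly what the paper does, and the four-term sequence $\overline{\A}(-,Y_P)\to\overline{\A}(-,X_P)\to\overline{\A}(-,P)\to\A/[\X](-,P)\to 0$ is indeed exact (your middle-term argument is essentially the computation in the proof of Proposition \ref{prop:adjoint}); in the paper this exactness comes out of Proposition \ref{prop:key}.

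The genuine gap is your third step, which you yourself flag as the hard part and which is where the proof actually lives; moreover the recursion you sketch for it is not the right one. Computing $K=\Ker\bigl(\overline{\A}(-,Y_P)\to\overline{\A}(-,X_P)\bigr)$ via the snake lemma applied to right $\boldsymbol{\omega}$-approximations of the (AB3) sequence (this is precisely the diagram in the proof of Proposition \ref{prop:key} with $\B=\boldsymbol{\omega}$) shows that $K$ is the image of a connecting map $\overline{\A}(-,\Omega_{\boldsymbol{\omega}}(P))\to\overline{\A}(-,Y_P)$: it is a quotient of the representable functor of the $\boldsymbol{\omega}$-syzygy of $P$ itself, and its subsequent syzygies involve $\Omega_{\boldsymbol{\omega}}(X_P)$ and $\Omega_{\boldsymbol{\omega}}(Y_P)$ as well. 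So $K$ is not ``of the same shape with $Y'=\Omega_{\boldsymbol{\omega}}(Y_P)$ in place of $Y_P$''; the correct recursion shifts the whole exact sequence $0\to Y_P\to X_P\to P$ by $\Omega_{\boldsymbol{\omega}}$, not just its left end. Note also that observing $K\in\mod(\A/[\X])$ and feeding it back into your first reduction is circular: that reduction only converts the problem into finiteness of $\pd_{\overline{\A}}\A/[\X](-,N)$ for new objects $N\in\A$, over whose $\widehat{\boldsymbol{\omega}}$-data you have no control, so the induction on $\ell$ does not close as stated. The clean way to finish (and the paper's way) is to apply Proposition \ref{prop:key} with $\B=\boldsymbol{\omega}$ to the (AB3) sequence, obtaining an explicit $\overline{\A}$-projective resolution of $\A/[\X](-,P)$ whose terms are $\overline{\A}(-,\Omega^k_{\boldsymbol{\omega}}(Y_P))$, $\overline{\A}(-,\Omega^k_{\boldsymbol{\omega}}(X_P))$, $\overline{\A}(-,\Omega^k_{\boldsymbol{\omega}}(P))$, and then to observe that $\Omega^n_{\boldsymbol{\omega}}(Y_P)\in\boldsymbol{\omega}$ for some $n$ because $Y_P\in\widehat{\boldsymbol{\omega}}$, so that term vanishes and the resolution terminates.
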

\begin{proof}
Since $\pd_{\A/[\X]}(F)<\infty$ by Lemma \ref{lem:singular complete} and the canonical inclusion $\iota:\mod(\A/[\X])\hookrightarrow\mod(\overline{\A})$ is exact, it is enough to check the case of $F=\A/[\X](-,M)$ for some $M\in\A$.
By (AB3), there exists an exact sequence $0\to Y_M\xto{g} X_M\xto{f} M$ in $\A$ with $f$ a right $\X$-approximation of $M$ and $Y_M\in\widehat{\boldsymbol{\omega}}$.
Applying the Yoneda embedding yields a projective resolution
$$0\to\A(-,Y_M)\xto{g\circ -}\A(-,X_M)\xto{f\circ -}\A(-,M)\to\A/[\X](-,M)\to 0$$
of the $\A$-module $\A/[\X](-,M)$.
Applying Proposition \ref{prop:key} to $\B:=\boldsymbol{\omega}$,
we have a projective resolution of the $\overline{\A}$-module $\A/[\X](-,M)$:
$$
\xymatrix@C=26pt@R=3pt{
\cdots\ar[r]&\overline{\A}(-,\Omega_{\boldsymbol{\omega}}(Y_M))\ar[r]^{\Omega_{\boldsymbol{\omega}} g\circ -}&\overline{\A}(-,\Omega_{\boldsymbol{\omega}}(X_M))\ar[r]^{\Omega_{\boldsymbol{\omega}} f\circ -}&\overline{\A}(-,\Omega_{\boldsymbol{\omega}}(M))&&\\
\ar[r]&\overline{\A}(-,Y_M)\ar[r]^{g\circ -}&\overline{\A}(-,X_M)\ar[r]^{f\circ -}&\overline{\A}(-,M)\ar[r]&A/[\X](-,M)\ar[r]&0.
}$$
Since $Y_M\in\widehat{\boldsymbol{\omega}}$, one has $\Omega_{\boldsymbol{\omega}}^n(Y_M)\in\boldsymbol{\omega}$ for some $n\geq 0$.
Thus $\overline{\A}(-,\Omega^n_\X(Y_M))=0$
and hence $\pd_{\overline{\A}}(\A/[\X](-,M))<\infty$.
\end{proof}

We are ready to prove Theorem \ref{thm:singular_equivalence_from_AB_approximation}.

\begin{proof}[Proof of Theorem \ref{thm:singular_equivalence_from_AB_approximation}]
By Lemma \ref{lem:Chen1} and Proposition \ref{prop:adjoint},
the canonical inclusion $\overline{\X}\hookrightarrow\overline{\A}$ induces a triangle functor $\bar{Q}:\ds(\overline{\A})\to\ds(\overline{\X})$.
By Theorem \ref{thm:Chen2} and Proposition \ref{prop:singular complete}, the triangle functor $\bar{Q}$ is an equivalence.
\end{proof}

\subsection{Singular equivalences from cotilting objects}
In this subsection we construct a singular equivalence from a given cotilting subcategory, using Theorem \ref{thm:singular_equivalence_from_AB_approximation}.
We denote by $\mathsf{P}(\C)$ (resp. $\mathsf{GP}(\C)$) the full subcategory of $\C$
consisting of projective (resp. Gorenstein projective) objects.
We abbreviate $\Omega M:=\Omega_{\mathsf{P}(\C)}M$ for each $M\in\C$
and denote by $\Omega^n \A$ the full subcategory of $\C$ consisting of objects isomorphic to $\Omega^n M$ for some $M\in\A$.
Moreover we define $\Omega^- M$ to be the kernel of a left $\mathsf{P}(\C)$-approximation of $M$.
Inductively we define $\Omega^{-n}M$ for any $n\geq 1$.

\begin{corollary}
\label{cor:singular_equivalence_from_cotilting}
Let $\A$ be an abelian category with enough projectives and $\T$ its contravariantly finite cotilting subcategory.
Then the canonical inclusion $\overline{^\perp \T}\hookrightarrow \overline{\A}$
induces a triangle equivalence $\ds(\overline{\A})\xto{\sim}\ds(\overline{^\perp \T})$.
\end{corollary}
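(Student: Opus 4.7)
The plan is to verify that the triple $(\C, \X, \boldsymbol{\omega}) := (\A, \,{^\perp\T}, \T)$ satisfies Condition \ref{condition} and then invoke Theorem \ref{thm:singular_equivalence_from_AB_approximation} directly. Under this identification the role of ``$\A$'' in Condition \ref{condition} is played by the ambient abelian category $\A$ of the corollary, so (AB1) holds trivially: every morphism in $\A$ has a kernel in $\A$. The inclusion $\T\subseteq{^\perp\T}$ and (AB2) both follow immediately from the $\Ext$-vanishing clause $\Ext^i_\A(I,J)=0$ for $I,J\in\T$ in the definition of a cotilting subcategory.

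The substantive step is to produce, for every $M\in\A$, a short exact sequence
\[
0\to Y_M\to X_M\xto{f}M
\]
with $f$ a right $^\perp\T$-approximation and $Y_M\in\widehat{\T}$; this is classical Auslander--Buchweitz for cotilting subcategories, but I would include the argument. First I would observe that, since $\A$ has enough projectives and $\T$ is contravariantly finite, iterating the defining extension $0\to M'\to I\to M''\to 0$ (applied in the dual direction with projective resolutions) yields for each $M\in\A$ a finite resolution showing $M$ admits a short exact sequence $0\to Y_M\to X_M\to M\to 0$ with $X_M\in{^\perp\T}$ and $Y_M\in\widehat{\T}$; the finiteness of the $\T$-resolution dimension of $Y_M$ is controlled by $\sup\{\id I\mid I\in\T\}\leq n$ from the cotilting axioms. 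To verify that $f:X_M\to M$ is then a right $^\perp\T$-approximation, I would check $\Ext^1_\A(X',Y_M)=0$ for every $X'\in{^\perp\T}$: a straightforward dimension-shift along a $\T$-resolution $0\to T_n\to\cdots\to T_0\to Y_M\to 0$ gives $\Ext^1_\A(X',Y_M)\cong\Ext^{1+n}_\A(X',T_n)=0$, since $X'\in{^\perp\T}$ kills all higher $\Ext$'s into $\T$. The long exact sequence of $\Hom_\A(X',-)$ applied to $0\to Y_M\to X_M\to M\to 0$ then shows that $\Hom_\A(X',X_M)\twoheadrightarrow\Hom_\A(X',M)$, which is (AB3); as a byproduct, $^\perp\T$ is contravariantly finite in $\A$.

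With all three conditions verified, Theorem \ref{thm:singular_equivalence_from_AB_approximation} delivers the required triangle equivalence $\ds(\overline{\A})\xto{\sim}\ds(\overline{^\perp\T})$ induced by the canonical inclusion. The main obstacle, as indicated, is not the application of the theorem but rather the Auslander--Buchweitz--style construction of $X_M\to M$, and in particular the verification that the finite $\T$-resolution of $Y_M$ exists and has the right $\Ext$-vanishing against $^\perp\T$; everything else reduces to unravelling definitions.
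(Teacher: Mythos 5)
Your proposal is correct and takes essentially the same route as the paper: check (AB1)--(AB3) for the triple $(\A,\,{}^\perp\T,\,\T)$ and apply Theorem \ref{thm:singular_equivalence_from_AB_approximation}, with (AB3) supplied by the classical Auslander--Buchweitz approximation theorem. The only differences are cosmetic: the paper simply cites \cite[Thm. 1.1]{ABu} and makes explicit the step $\widehat{{}^\perp\T}=\A$ (since $\id I\leq n$ for all $I\in\T$ forces $\Omega^n M\in{}^\perp\T$), whereas you sketch the construction more loosely but add the correct dimension-shift verification that $\Ext^1_\A(X',Y_M)=0$ for $X'\in{}^\perp\T$ and $Y_M\in\widehat{\T}$, which is precisely why the map $X_M\to M$ is a right ${}^\perp\T$-approximation (and why ${}^\perp\T$ is contravariantly finite).
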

\begin{proof}
Setting $\X:={^\perp\T}$ and $\boldsymbol{\omega}:=\T$, we shall show that the sequence $\A\supseteq \X\supseteq \boldsymbol{\omega}$ satisfies conditions (AB1)-(AB3).
The condition (AB1) is obvious, because $\A=\C$.
The condition (AB2) holds by definition.

(AB3): By \cite[Thm. 1.1]{ABu}, for any $M\in\widehat{\X}$, there exists an exact sequence
$$0\to Y_M\to X_M\to M\to 0$$
with $Y_M\in\widehat{\omega}$ and $X_M\in\X$.
It remains to show $\widehat{\X}=\A$.
Since there exists an integer $n\geq 0$ such that $\id I\leq n$ for all $I\in\boldsymbol{\omega}$, it follows that $\Omega^n M\in\X$ holds for all $M\in\A$.
This shows $\widehat{\X}=\A$.
Thanks to Theorem \ref{thm:singular_equivalence_from_AB_approximation}, we have a desired triangle equivalence.
\end{proof}

\subsection{Matsui-Takahashi's Singular equivalence}
We provide an alternative proof for Matsui-Takahashi's singular equivalence.

\begin{definition}\label{def:resolving}
Let $\C$ be an abelian category with enough projectives.
A full subcategory $\A$ of $\C$ is called \textit{quasi-resolving} if it is closed under kernels of epimorphisms and contains all projectives. A quasi-resolving subcategory is called \textit{resolving} if it is closed under extensions and direct summands.
\end{definition}

\begin{corollary}\textnormal{\cite[Thm. 5.4(3)]{MT}}
\label{cor:MT}
Let $\A$ be a quasi-resolving subcategory of an abelian category $\C$ with enough projectives.
Assume that $\A$ together with an integer $n\in\mathbb{Z}_{\geq 0}$ satisfies the condition
\begin{equation}
\Omega^n\A \textit{\ is contained in\ }\mathsf{GP}(\C) \textit{\ and closed under cosyzygies} \tag{$*$}
\end{equation}and set $\X:=\Omega^n\A$.
Then the canonical inclusion $\underline{\X}\hookrightarrow\underline{\A}$ induces a triangle equivalence $\ds(\underline{\A})\xto{\sim} \ds(\underline{\X})$.
\end{corollary}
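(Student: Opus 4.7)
The plan is to specialize Theorem \ref{thm:singular_equivalence_from_AB_approximation} to the triple $(\A,\X,\boldsymbol{\omega})=(\A,\Omega^n\A,\mathsf{P}(\C))$, so that the ideal quotients $\overline{\A}=\A/[\mathsf{P}(\C)]$ and $\overline{\X}=\X/[\mathsf{P}(\C)]$ coincide with the stable categories $\underline{\A}$ and $\underline{\X}$. Because $\C$ has enough projectives, $\mathsf{P}(\C)$ is contravariantly finite in $\A$, and contravariant finiteness of $\Omega^n\A$ in $\A$ can be deduced from the $n$-th syzygy together with the cosyzygy closure asserted in $(*)$. It then remains to verify Condition \ref{condition} (AB1)--(AB3).

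For (AB1), because $\C$ has enough projectives, a $\mathsf{P}(\C)$-epimorphism is the same as an epimorphism in $\C$, so (AB1) becomes the closure of $\A$ under kernels of epimorphisms, which is exactly the quasi-resolving hypothesis. For (AB2), the inclusion $\X\subseteq\mathsf{GP}(\C)$ yields $\Ext^i_\C(X,P)=0$ for all $X\in\X$, $P\in\mathsf{P}(\C)$ and $i>0$ directly from the definition of a Gorenstein projective object.

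The core of the argument is (AB3), which I would handle in the spirit of the proof of Corollary \ref{cor:singular_equivalence_from_cotilting}. The cosyzygy closure in $(*)$ precisely says that every $X\in\X$ embeds in a short exact sequence $0\to X\to P\to X'\to 0$ with $P\in\mathsf{P}(\C)$ and $X'\in\X$; that is, $\mathsf{P}(\C)$ is a cogenerator of $\X$ in the Auslander-Buchweitz sense. Combined with (AB2), this activates \cite[Thm. 1.1]{ABu}, producing for each $M\in\widehat{\X}$ an exact sequence $0\to Y_M\to X_M\xrightarrow{f} M\to 0$ with $X_M\in\X$, $f$ a right $\X$-approximation, and $Y_M\in\widehat{\mathsf{P}(\C)}=\widehat{\boldsymbol{\omega}}$. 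The remaining inclusion $\A\subseteq\widehat{\X}$ then follows from the ordinary projective resolution
\[
0\to\Omega^n M\to P_{n-1}\to\cdots\to P_0\to M\to 0
\]
of $M\in\A$: its leftmost term lies in $\X$ and the intermediate $P_i$ lie in $\mathsf{P}(\C)\subseteq\X$, after harmlessly enlarging $\X$ by projective summands, which affects neither $\underline{\X}$ nor the cosyzygy closure in $(*)$.

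I expect the main obstacle to be (AB3), specifically the bookkeeping around the cogenerator property of $\mathsf{P}(\C)$ in $\X$ and the identity $\widehat{\X}=\A$; once that is in place, Theorem \ref{thm:singular_equivalence_from_AB_approximation} delivers the triangle equivalence $\ds(\underline{\A})\xto{\sim}\ds(\underline{\X})$ at once.
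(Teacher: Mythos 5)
Your reductions of (AB1) and (AB2), and the overall plan of feeding the triple $(\A,\Omega^n\A,\mathsf{P}(\C))$ into Theorem \ref{thm:singular_equivalence_from_AB_approximation}, agree with the paper. The gap is in your treatment of (AB3): you invoke \cite[Thm. 1.1]{ABu}, but that theorem operates under Condition \ref{AB_condition}, which requires in particular that $\X$ be closed under extensions and direct summands (and this is how it is used in Corollary \ref{cor:singular_equivalence_from_cotilting}, where $\X={^\perp\T}$ has these closure properties automatically, being defined by Ext-vanishing). Here $\A$ is only \emph{quasi-resolving}, so $\X=\Omega^n\A$ — even after adjoining projective summands, as you propose — has no reason to be closed under extensions or direct summands, and the classical Auslander-Buchweitz approximation theorem simply does not apply. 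Circumventing precisely this obstruction is the point of the generalized Condition \ref{condition}, where $\boldsymbol{\omega}$ need not be a cogenerator and, more importantly for this corollary, no extension/summand closure of $\X$ is demanded; so your argument reduces the corollary to a statement whose hypotheses are not available.

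What the paper does instead is construct the approximation by hand. For $M\in\A$ one takes the syzygy sequence $0\to G\to P_{n-1}\to\cdots\to P_0\to M\to 0$ with $G=\Omega^n M\in\X$; since $G\in\mathsf{GP}(\C)$ and $(*)$ gives $\Omega^{-n}(G)\in\X$, one compares this with a coresolution $0\to G\to Q_{n-1}\to\cdots\to Q_0\to\Omega^{-n}(G)\to 0$ built from left $\mathsf{P}(\C)$-approximations, and takes the mapping cone of the resulting chain map; after splitting off $G$, this yields an exact sequence $0\to\Ker f\to\Omega^{-n}(G)\oplus P_0\xto{f}M\to 0$ with $\Ker f\in\widehat{\boldsymbol{\omega}}$, and $f$ is a right $\X$-approximation by Lemma \ref{lem:adjoint}, since $\Ext^1_\C(\X,\widehat{\boldsymbol{\omega}})=0$. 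Note that this construction also delivers the contravariant finiteness of $\X$ in $\A$, which you only assert "can be deduced from the $n$-th syzygy and cosyzygy closure"; your resolution argument shows $M\in\widehat{\X}$ (after enlarging $\X$ by projectives), but membership in $\widehat{\X}$ alone does not produce right $\X$-approximations once the ABu machinery is unavailable. To repair your proof you would have to replace the appeal to \cite[Thm. 1.1]{ABu} by essentially this mapping-cone construction.
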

\begin{proof}
Setting $\X:=\Omega^n\A$ and $\boldsymbol{\omega}:=\mathsf{P}(\C)$, we shall show that the sequence $\A\supseteq\X\supseteq\boldsymbol{\omega}$ of subcategories in $\C$ satisfies the conditions (AB1)-(AB3).
(AB1): Since $\mathsf{P}(\C)$-epikernels are epimorphisms, the condition (AB1) follows from the definition of quasi-resolving subcategories.

(AB2): Since $\X\subseteq \mathsf{GP}(\C)$, we have $\X\subseteq {^\perp\boldsymbol{\omega}}$.

(AB3): Let $M\in\A$.
By the condition $(*)$, we have an exact sequence
$$0\to G\to P_{n-1}\to \cdots P_0\to M\to 0$$
with $G\in\X$ and $P_{n-1},\cdots, P_0\in\mathsf{P}(\C)$.
Since $G\in\mathsf{GP}(\C)$, we have an exact sequence
$$0\to G\xto{g_n} Q_{n-1}\xto{g_{n-1}} \cdots\to Q_0\xto{g_0} \Omega^{-n}(G)\to 0$$
with the canonical morphisms $\Im g_i\to Q_i$ being left $\mathsf{P}(\C)$-approximations for each $1\leq i\leq n$.
Thus we have the following chain map, where $\Omega^{-n}(G)\in\Omega^n\A =\X$ by the condition $(*)$.
\begin{equation*}
\xymatrix{
0\ar[r]&G\ar@{=}[d]\ar[r]& Q_{n-1}\ar[d]\ar[r]& \cdots\ar[r] &Q_0\ar[d]\ar[r]& \Omega^{-n}(G)\ar[r]\ar[d]& 0\\
0\ar[r]&G\ar[r]& P_{n-1}\ar[r]& \cdots\ar[r] &P_0\ar[r]& M\ar[r]& 0
}
\end{equation*}
By taking the mapping cone of the above chain map, we have an exact sequence
$$0\to G\to Q_{n-1}\oplus G\to Q_{n-2}\oplus P_{n-1}\to \cdots\to Q_0\oplus P_1\to\Omega^{-n}(G)\oplus P_0\xto{} M\to 0.$$
Since the left-most morphism $G\to Q_{n-1}\oplus G$ is a split-monomorphism,
we have the following exact sequence
\begin{equation}\label{seq:1}
0\to Q_{n-1}\to Q_{n-2}\oplus P_{n-1}\to \cdots\to Q_0\oplus P_1\to\Omega^{-n}(G)\oplus P_0\xto{f} M\to 0.
\end{equation}
Obviously $\Ker f\in\widehat{\boldsymbol{\omega}}$ holds.
The exact sequence $0\to \Ker f\to \Omega^{-n}(G)\oplus P_0\xto{f} M\to 0$ is a desired one. Indeed, $f$ is a right $\X$-approximation by Lemma \ref{lem:adjoint}.
\end{proof}

Recall that an additive category $\A$ with weak-kernels is said to be \textit{Iwanaga-Gorenstein} if $\id_\A (\A(-,M)), \id_{\A^{\op}}(\A(M,-))<\infty$ for any $M\in\A$.
Typical examples of Iwanaga-Gorenstein rings are
finite dimensional selfinjective algebras over a field $k$ and
commutative Gorenstein rings of finite Krull dimension.
As an obvious consequence of Corollary \ref{cor:singular_equivalence_from_cotilting} or \ref{cor:MT}, we have:

\begin{example}\label{ex:IG}
Let $\Lambda $ be an Iwanaga-Gorenstein ring with $\id_\Lambda (\Lambda )=n$ and $\mathsf{CM}\Lambda :={^\perp \Lambda }$.
Then the canonical inclusion $\underline{\mathsf{CM}}\Lambda \hookrightarrow\underline{\mod}\Lambda $
induces a triangle equivalence $\ds(\underline{\mod}\Lambda )\xto{\sim}\ds(\underline{\mathsf{CM}}\Lambda )$.
\end{example}

\section{More Results and Examples}
In this section, we provide further investigations on Condition \ref{condition}.
First we give sufficient conditions so that $\X/[\omega]$ is Iwanaga-Gorenstein and of finite global dimension, respectively.

\begin{theorem}\label{thm:global_dimension}
Let $\Lambda $ be a finite dimensional algebra and $T\in\mod \Lambda $ a cotilting module.
We set $\underline{^\perp T}:={^\perp T}/[\Lambda]$ and $\overline{^\perp T}:={^\perp T}/[T]$.
Then the followings hold:
\begin{itemize}
\item[\textnormal{(a)}] If $\Lambda $ is Iwanaga-Gorenstein, then so is $\overline{^\perp T}$.
Moreover, one has $\id_{(\overline{^\perp T})} F\leq 3\max\{\pd_\Lambda  T, \id_\Lambda \Lambda \}$ for any projective $(\overline{^\perp T})$-module $F$.
\item[\textnormal{(b)}] If $\gd \Lambda =n$, then we have $\gd (\overline{^\perp T})\leq 3n-1$.
\end{itemize}
\end{theorem}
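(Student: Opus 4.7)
The plan is to apply Proposition~\ref{prop:key} with $\A = {^\perp T}$ and $\B = \add T$ in order to build explicit projective resolutions of modules in $\mod\overline{^\perp T}$, and then bound their lengths using the homological data of $\Lambda$ and $T$. A preliminary step is to verify that ${^\perp T}$ is closed under $[T]$-epikernels in $\mod\Lambda$; this follows from the Ext-vanishing $\Ext^i_\Lambda({^\perp T},T)=0$ combined with the cotilting approximation axioms, which together identify the kernel of a $[T]$-epimorphism in ${^\perp T}$ as again lying in ${^\perp T}$.

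For part~(b), given $F\in\mod\overline{^\perp T}$, a projective presentation lifts via Proposition~\ref{prop:projective dimension} to an exact sequence $0 \to N \to M \to L$ in ${^\perp T}$ whose middle map is a $[T]$-epimorphism. Proposition~\ref{prop:key} then produces a projective resolution of $F$ whose $k$-th block contributes the three terms $\overline{^\perp T}(-,\Omega_{\add T}^k X)$ for $X\in\{N,M,L\}$. The crucial step is to show that $\Omega_{\add T}^n X\in\add T$ for every $X\in{^\perp T}$: since $\gd\Lambda = n$ gives $\pd_\Lambda X\leq n$, an inductive comparison between the $\add T$-syzygy sequences $0\to\Omega_{\add T} X\to T^0\to X\to 0$ and the usual $\Lambda$-projective syzygies (using the Ext-vanishing of ${^\perp T}$ against $T$ to stay inside the perpendicular) forces $\Omega_{\add T}^n X$ to be zero in $\overline{^\perp T}$. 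Consequently $\pd_{\overline{^\perp T}} F\leq 3n-1$.

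For part~(a), the target is weaker than finite global dimension: one needs only $\id_{\overline{^\perp T}}\overline{^\perp T}(-,M)$ and $\id_{(\overline{^\perp T})^{\op}}\overline{^\perp T}(M,-)$ to be finite. The same blueprint as in (b) applies, except that the syzygy-termination is now controlled by $\max\{\pd_\Lambda T,\id_\Lambda\Lambda\}$: each $\add T$-syzygy step either decreases $\pd_\Lambda$ of the module (bounded by $\id_\Lambda\Lambda$ since $\Lambda$ is Iwanaga-Gorenstein) or brings it strictly closer to $\add T$ (bounded by $\pd_\Lambda T$). To upgrade the resulting projective-dimension bound into the stated injective-dimension bound, one applies the $k$-duality $D=\Hom_k(-,k)$, which turns injective coresolutions of $\overline{^\perp T}(-,M)$ into projective resolutions of $D\overline{^\perp T}(-,M)$ in $\mod(\overline{^\perp T})^{\op}$; running the same syzygy calculation on the opposite pair $({^\perp T})^{\op}\supseteq(\add T)^{\op}$ controls both $\id_{\overline{^\perp T}}\overline{^\perp T}(-,M)$ and $\id_{(\overline{^\perp T})^{\op}}\overline{^\perp T}(M,-)$, establishing the Iwanaga-Gorenstein property.

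The main technical obstacle will be the syzygy-termination step: unlike $\Lambda$-syzygies, the $\add T$-syzygies $\Omega_{\add T}^k X$ are not equal to $\Omega_\Lambda^k X$ and do not automatically inherit finiteness from $\gd\Lambda$ or from the Iwanaga-Gorenstein bound. One must carefully thread the cotilting conditions (AB2) and (AB3) through each iteration to verify that after the prescribed number of steps the syzygy factors through $\add T$. Once this comparison is established, the factor $3$ in both bounds is a direct manifestation of the three-term block structure of the resolution in Proposition~\ref{prop:key}.
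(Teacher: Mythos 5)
Your plan replaces the paper's route (reduce to $\underline{^\perp T}={^\perp T}/[\Lambda]$ via the Auslander--Reiten translation equivalence $\underline{^\perp T}\simeq\overline{^\perp T}$ of \cite{INP}, identify the injective modules by AR duality as the functors $\Ext^1_\Lambda(-,M)$, and apply Proposition \ref{prop:key} with $\B=\proj\Lambda$, so that termination of the resolutions comes from $\pd_\Lambda T<\infty$ and $\id_\Lambda\Lambda<\infty$, resp.\ $\gd\Lambda=n$) by working directly in $\overline{^\perp T}$ with $\add T$-syzygies, and the decisive steps of that replacement are not established. The central gap is the termination claim $\Omega_{\add T}^{\,n}X\in\add T$ for all $X\in{^\perp T}$ (and its analogue for (a)): the justification you sketch compares $\add T$-syzygies with $\Lambda$-projective syzygies, but right $\add T$-approximations are not surjective in general --- indeed $\Hom_\Lambda(T,X)$ can vanish for nonzero $X\in{^\perp T}$ --- so your sequence $0\to\Omega_{\add T}X\to T^0\to X\to 0$ is not exact at $X$ and there is no epimorphism to compare with a projective cover; dimension shifting against $\pd_\Lambda X$ only controls $\Ext^*_\Lambda(-,T)$ or syzygies over $\End_\Lambda(T)$, neither of which forces membership in $\add T$ after exactly $n$ steps. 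In case (a) the heuristic ``each step decreases $\pd_\Lambda$ of the module, bounded by $\id_\Lambda\Lambda$'' is moreover based on a false premise: when $\Lambda$ is Iwanaga--Gorenstein of infinite global dimension and, say, $T=\Lambda$, the objects of ${^\perp T}$ are the Gorenstein projective modules and have infinite projective dimension. The finiteness the paper actually exploits is $\pd_\Lambda T<\infty$ (since $T$ is tilting when $\Lambda$ is Iwanaga--Gorenstein) together with $\id_\Lambda\Lambda<\infty$, fed into ordinary $\Lambda$-syzygies inside ${^\perp T}/[\Lambda]$; your scheme has no substitute for these inputs, and without them the stated bounds $3\max\{\pd_\Lambda T,\id_\Lambda\Lambda\}$ and $3n-1$ are not derived.

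There are two further unproved prerequisites. First, to invoke Propositions \ref{prop:projective dimension} and \ref{prop:key} with $\A={^\perp T}$, $\B=\add T$ you need ${^\perp T}$ to be closed under $\add T$-epikernels; this does not follow from the Ext-vanishing and approximation axioms in the way you assert. Note that any morphism $f\colon M\to L$ in ${^\perp T}$ with $\Hom_\Lambda(T,L)=0$ is automatically an $\add T$-epimorphism, and the long exact sequences only give $\Ext^i_\Lambda(\Ker f,T)\cong\Ext^{i+2}_\Lambda(\Cok f,T)$ for $i\geq 1$, which your hypotheses do not annihilate. (The paper needs only closure of ${^\perp T}$ under kernels of genuine epimorphisms, which is immediate.) Second, the passage from projective to injective dimensions and to the opposite side --- the identification of the injective $\overline{^\perp T}$-modules and the two-sidedness in the definition of Iwanaga--Gorenstein --- is only gestured at via $k$-duality; the paper handles precisely this point through AR duality in $\underline{^\perp T}$, where injectives are the $\Ext^1_\Lambda(-,M)$ and an injective coresolution of $\underline{^\perp T}(-,M)$ is produced explicitly from $0\to\Omega_\Lambda M\to P\to M\to 0$, terminating because $\id_\Lambda P<\infty$. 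As it stands, the proposal leaves the theorem unproved.
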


The assertion (b) can be found in \cite[Thm. 6.1]{Kim}.
Let us recall from \cite[Thm. 3.4]{INP} (see also \cite{Eno, Jia}), there exist Auslander-Reiten translations on $^\perp T$, that is, mutually equivalences
$$\tau: \underline{^\perp T}\xto{\sim}\overline{^\perp T}\textnormal{\ \ and\ \ }\tau^-:\overline{^\perp T}\xto{\sim}\underline{^\perp T}.$$
Moreover, they induce functorial isomorphisms
$$D\Ext^1_A(M,N)\cong \underline{^\perp T}(\tau^- N,M)\cong \overline{^\perp T}(N,\tau M)$$
in $M,N\in{^\perp T}$ which are known as Auslander-Reiten dualities, where $D:=\Hom_k(-.k)$.

\begin{proof}[Proof of Theorem \ref{thm:global_dimension}]
(a) Since there exists an equivalence $\overline{^\perp T}\xto{\sim}\underline{^\perp T}$, we shall show that $\underline{^\perp T}$ is Iwanaga-Gorenstein.
Thanks to Auslander-Reiten duality, every injective $(\underline{^\perp T})$-module is of the form $\Ext^1_\Lambda (-,M)$ for some $M\in{^\perp T}$.
Since $T$ is a cotilting module, we get an exact sequence $0\to M\to T'\to N\to 0$ with $T'\in\add T$ and $N\in{^\perp T}$.
The induced sequence
$$0\to \Hom_\Lambda (-,M)\to\Hom_\Lambda (-,T')\to\Hom_\Lambda (-,N)\to \Ext^1_\Lambda (-,M)\to 0$$
gives a projective resolution of  $(^\perp T)$-module $\Ext^1_\Lambda (-,M)$.
By Proposition \ref{prop:key}, we have a projective resolution
\begin{equation}\label{pr_for_IG}
\xymatrix@C=20pt@R=3pt{
\cdots\ar[r]&\underline{^\perp T}(-,\Omega_\Lambda (M))\ar[r]&\underline{^\perp T}(-,\Omega_\Lambda (T'))\ar[r]&\underline{^\perp T}(-,\Omega_\Lambda (N))&&\\
\ar[r]^{\delta\ \ \ \ \ \ \ }&\underline{^\perp T}(-,M)\ar[r]&\underline{^\perp T}(-,T')\ar[r]&\underline{^\perp T}(-,N)\ar[r]&\Ext^1_\Lambda (-,M)\ar[r]&0
}
\end{equation}
of the $(\underline{^\perp T})$-module $\Ext^1_\Lambda (-,M)$.
Since $\Lambda $ is Iwanaga-Gorenstein, $T$ is a tilting module, in particular $\pd_\Lambda (T)<\infty$.
Thus there exists an integer $n\geq 0$ such that $\Omega^n_\Lambda (T')\in\proj \Lambda $.
Hence every injective $(\underline{^\perp T})$-module $\Ext^1_\Lambda (-,M)$ is of finite projective dimension.
Next we shall show that every projective $(\underline{^\perp T})$-module $\underline{^\perp T}(-,M)$ is of finite injective dimension.
Considering the first syzygy of $M$, namely an exact sequence $0\to \Omega_\Lambda M\to P\to M\to 0$ with $P\in\proj \Lambda $, we get an injective resolution
\begin{equation}\label{pr_for_IG2}
0\to \underline{^\perp T}(-,M)\to \Ext^1_\Lambda (-,\Omega_\Lambda M)\to\Ext^1_\Lambda (-,P)\to\Ext^1_\Lambda (-,M)\to \cdots
\end{equation}
of the $(\underline{^\perp T})$-module $\underline{^\perp T}(-,M)$.
Since $\Lambda $ is Iwanaga-Gorenstein, we have $\id_\Lambda P<\infty$.
We have thus concluded that $\overline{^\perp T}$ is Iwanaga-Gorenstein.
The latter formula follows from the sequence (\ref{pr_for_IG}) and (\ref{pr_for_IG2}).

(b) We shall show that $\gd(\underline{^\perp T})\leq 3n-1$.
Let $F\in \mod(\underline{^\perp T})$ with a projective presentation $\underline{^\perp T}(-,M)\to\underline{^\perp T}(-,L)\to F\to 0$.
Since $F$ vanishes on $\proj \Lambda $, the corresponding morphism $f:M\to L$ is an epimorphism in $\mod \Lambda $.
Since $^\perp T$ is closed under epimorphisms, we have an exact sequence
$0\to N\to M\to L\to 0$ in $^\perp T$ which induces a projective resolution
$$
\xymatrix@C=20pt@R=3pt{
\cdots\ar[r]&\underline{^\perp T}(-,\Omega_\Lambda (N))\ar[r]&\underline{^\perp T}(-,\Omega_\Lambda (M))\ar[r]&\underline{^\perp T}(-,\Omega_\Lambda (L))&&\\
\ar[r]&\underline{^\perp T}(-,N)\ar[r]&\underline{^\perp T}(-,M)\ar[r]&\underline{^\perp T}(-,L)\ar[r]&F\ar[r]&0
}$$
of the $(\underline{^\perp T})$-module $F$.
The assumption $\gd \Lambda =n$ implies $\Omega^n_\Lambda  (L)\in\proj \Lambda $.
Hence $\pd_{(\underline{^\perp T})}F\leq 3n-1$.
\end{proof}

Theorem \ref{thm:global_dimension} contains the following well-known result.

\begin{example}\cite[Prop. 10.2]{AR74}
Let $\Lambda $ be a finite dimensional algebra with $\gd \Lambda =n$.
Then we have $\gd (\underline{\mod} \Lambda )\leq 3n-1.$
\end{example}

Next we explain that (AB1)-(AB3) in Condition \ref{condition} are satisfied in the classical Auslander-Buchweitz theory:
Let $\C$ be an abelian category with enough projectives and $\X\supseteq \boldsymbol{\omega}$ a sequence of full subcategories in $\C$.
We say that $\boldsymbol{\omega}$ is a \textit{cogenerator of $\X$} if, for each $X\in\X$, there exists an exact sequence $0\to X\to I\to X'\to 0$ with $I\in\boldsymbol{\omega}, X'\in\X$.

\begin{condition}\cite[p. 9, 17]{ABu}
\label{AB_condition}
For a sequence $\X\supseteq\boldsymbol{\omega}$ of full subcategories in $\C$,
we consider the following conditions:
\begin{itemize}
\item $\widehat{\X}=\C$;
\item $\X$ is closed under direct summands and extension;
\item $\Ext^i_\C(X,I)=0$ for any $X\in\X, I\in\boldsymbol{\omega}$ and $i>0$;
\item $\boldsymbol{\omega}$ is a cogenerator of $\X$ which is closed under direct summands.
\end{itemize}
\end{condition}

Under these conditions, it is known that, for each $M\in\C$, there exists an exact sequence
\begin{equation}\label{seq:AB-approximation}
0\to Y_M\to X_M\xto{} M\to0
\end{equation}
with $X_M\in\X, Y_M\in\widehat{\boldsymbol{\omega}}$ \cite[Thm. 1.1]{ABu}.
The sequence ($\ref{seq:AB-approximation}$) is called the \textit{Auslander-Buchweitz approximation of $M$}.
As a benefit of our \textit{generalized Auslander-Buchweitz approximation} in (AB3), we shall show Proposition \ref{prop:torsion}.
Notice that, in the proposition, the subcategory $\omega$ is not necessarily a cogenerator of $\X$, and right $\X$-approximations of objects of $\A$ appearing in (AB3) are not necessarily surjective.

\begin{proposition}\label{prop:torsion}
Let $\A$ be an abelian category with enough projectives and $\X\supseteq \boldsymbol{\omega}$ a sequence of full subcategories of $\A$.
Suppose that $\X$ is a torsion class of $\A$ and $\boldsymbol{\omega}$ is contravariantly finite in $\A$ and satisfies $\Ext_\A^i(X,I)=0$ for any $X\in\X, I\in\boldsymbol{\omega}$ and $i> 0$. Then the sequence $\A\supseteq \X\supseteq \boldsymbol{\omega}$ satisfies (AB1)-(AB3).
\end{proposition}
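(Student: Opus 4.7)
The plan is to verify (AB1), (AB2), and (AB3) of Condition \ref{condition} in turn, with the observation that here the ambient abelian category $\C$ of Condition \ref{condition} is taken to be $\A$ itself, so the top subcategory coincides with the whole of $\A$.

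First I would dispatch (AB1) and (AB2). Since $\A$ is abelian, the kernel of any morphism $f: N \to M$ in $\A$ already lies in $\A$, so the $\boldsymbol{\omega}$-epimorphism hypothesis imposes no extra constraint and (AB1) is immediate. Condition (AB2) is literally the Ext-vanishing assumption in the statement. The contravariant finiteness of $\boldsymbol{\omega}$ in $\A$ is also given, so the setup of Condition \ref{condition} is in force provided I can also show $\X$ is contravariantly finite in $\A$, which will fall out of the argument for (AB3).

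The substantive step is (AB3), and this is where the torsion-class hypothesis is used. Since $\X$ is a torsion class of $\A$, there is an associated torsion pair $(\X, \Y)$ with torsion radical $t: \A \to \X$, so that each $M \in \A$ fits into a canonical short exact sequence $0 \to tM \to M \to M/tM \to 0$ with $tM \in \X$ and $M/tM \in \Y$. I would verify that the inclusion $f_M: tM \hookrightarrow M$ is a right $\X$-approximation of $M$: for any morphism $h: X \to M$ with $X \in \X$, the image of $h$ is a quotient of $X$ and hence lies in $\X$ (torsion classes are closed under quotients), so it is contained in the maximal torsion subobject $tM$ and $h$ factors through $f_M$. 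This also confirms that $\X$ is contravariantly finite in $\A$. Setting $X_M := tM$, $Y_M := 0 \in \widehat{\boldsymbol{\omega}}$, and $f := f_M$ then produces the exact sequence $0 \to Y_M \to X_M \xrightarrow{f} M$ required by (AB3), with $f$ a monomorphism.

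There is no genuine obstacle here. The only conceptual point worth emphasizing is that the authors' formulation of (AB3) does \emph{not} demand surjectivity of the approximation $f$, which is exactly what makes the torsion-class setting fit: classically the Auslander--Buchweitz approximation is a surjection and the torsion-class example would fall outside that framework, whereas under Condition \ref{condition} it is automatic.
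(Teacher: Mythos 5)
Your proposal is correct and takes essentially the same route as the paper: (AB1) and (AB2) are immediate (the paper calls them obvious), and (AB3) is supplied by the torsion subobject sequence $0\to tM\to M$, with the inclusion $tM\hookrightarrow M$ serving as a right $\X$-approximation and $Y_M=0\in\widehat{\boldsymbol{\omega}}$. You merely spell out the details that the paper's terse proof leaves implicit (closure of $\X$ under quotients giving the approximation property, hence also contravariant finiteness of $\X$).
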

\begin{proof}
The conditions (AB1) and (AB2) are obvious.
Since $\X$ is a torsion class, for any $M\in\A$ there exists an exact sequence $0\to X\to M$ with $X\in\X$, hence (AB3) holds.
\end{proof}

We end this section by giving examples of singularly equivalent categories using Corollary \ref{cor:singular_equivalence_from_cotilting}.

\begin{example}
Fix an integer $n\in\mathbb{Z}_{>0}$.
Let $\Lambda $ be the algebra defined by the following quiver with relations.
$$
\xymatrix{
1\ar@/^12pt/[r]^\alpha&2,\ar@/^12pt/[l]^\beta &\langle (\alpha\beta)^n\alpha\rangle
}
$$
We describe the Auslander-Reiten quiver of $\Lambda $.
Since $\Lambda $ is a Nakayama algebra, an indecomposable module is determined by the pair $(m,l)$ of the socle $l$ and the Loewy length $l$.
We shall denote the module by $[m]_l$.
\begin{equation*}
{\small
\xymatrix@!C=21pt@R=6pt{
\ar@{--}[rrrrrrr]\ar@{.}[d]&[2]_2\ar@{.}[dd]\ar[rd]\ar[ld]&\ar@{.}[d]&[1]_4\ar@{.}[dd]\ar[ld]&&[1]_{2n}\ar@{.}[dd]\ar[rd]&&\\
[1]_1\ar@{.}[dd]\ar[rd]&&[2]_2\ar@{.}[dd]\ar[rd]\ar[ld]&&\cdots&&[1]_{2n+1}\ar@{.}[dd]\ar[rd]\ar[ld]&\\
&[1]_2\ar@{.}[dd]\ar[rd]\ar[ld]&&[2]_4\ar@{.}[dd]\ar[ld]&&[2]_{2n}\ar@{.}[dd]\ar[rd]&&[1]_{2n+2}\ar[ld]\\
[2]_1\ar@{.}[d]\ar[rd]&&[1]_3\ar@{.}[d]\ar[rd]\ar[ld]&&\cdots&&[2]_{2n+1}\ar[ld]&\\
\ar@{--}[rrrrrrr]&[2]_2&&[1]_4&&[1]_{2n}&&
}}
\end{equation*}
We can easily check that the module $T:=[1]_1\oplus [1]_{2n+2}$ is a cotilting module of $\id_\Lambda (T)= 1$.
Due to Corollary \ref{cor:singular_equivalence_from_cotilting}, we conclude that $\overline{\mod}\Lambda :=(\mod \Lambda )/[T]$ is singularly equivalent to $\overline{^\perp T}:=({^\perp T})/[T]$.
Their Auslander-Reiten quivers are described as follows:
\begin{equation*}
{\small
\xymatrix@!C=21pt@R=6pt{
&\ar@{--}[rrrrrrr]&[2]_2\ar@{.}[dd]\ar[rd]&\ar@{.}[d]&[1]_4\ar@{.}[dd]\ar[ld]&&\ar@{.}[d]&[1]_{2n}\ar@{.}[dd]\ar[ld]\ar[rd]&\\
&&&[2]_2\ar@{.}[dd]\ar[rd]\ar[ld]&&\cdots&[2]_{2n-1}\ar@{.}[dd]\ar[rd]&&[1]_{2n+1}\ar@{.}[dd]\ar[ld]\\
\overline{\mod} A&&[1]_2\ar@{.}[dd]\ar[rd]\ar[ld]&&[2]_4\ar@{.}[dd]\ar[ld]&&&[2]_{2n}\ar@{.}[dd]\ar[ld]\ar[rd]&\\
&[2]_1\ar[rd]&&[1]_3\ar@{.}[d]\ar[rd]\ar[ld]&&\cdots&[1]_{2n-1}\ar@{.}[d]\ar[rd]&&[2]_{2n+1}\ar[ld]\\
&\ar@{--}[rrrrrrr]&[2]_2&&[1]_4&&&[1]_{2n}&&\\
&\ar@{--}[rrrrrrr]\ar@{.}[d]&[1]_4\ar@{.}[dd]\ar[rd]\ar[ld]&\ar@{.}[d]&[1]_7\ar@{.}[dd]\ar[ld]&&\ar@{.}[d]&[1]_{2n}\ar@{.}[dd]\ar[ld]\ar[rd]&\\
\overline{^\perp T}&[1]_2\ar[rd]&&[1]_5\ar@{.}[dd]\ar[rd]\ar[ld]&&\cdots&[1]_{2n-2}\ar@{.}[dd]\ar[rd]&&[1]_{2n+1}\ar@{.}[dd]\ar[ld]\\
&&[1]_3\ar@{.}[d]\ar[rd]&&[1]_6\ar@{.}[d]\ar[ld]&&&[1]_{2n-1}\ar@{.}[d]\ar[rd]\ar[ld]&\\
&\ar@{--}[rrrrrrr]&&[1]_4&&\cdots&[1]_{2n-3}&&[1]_{2n}
}}
\end{equation*}
where the dotted lines stand for natural mesh relations.

\begin{claim*}
If $n=1$, both $\overline{\mod}\Lambda $ and $\overline{^\perp T}$ are of finite global dimension, otherwise they are non Iwanaga-Gorenstein.
\end{claim*}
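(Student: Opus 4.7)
The plan is to treat the two cases separately, exploiting the singular equivalence $\ds(\overline{\mod}\Lambda)\simeq\ds(\overline{^\perp T})$ provided by Corollary~\ref{cor:singular_equivalence_from_cotilting}.

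For $n=1$, I would verify directly from the relation $\alpha\beta\alpha=0$ that $\overline{^\perp T}$ becomes essentially semisimple. Concretely, a short inspection shows that the only indecomposable of $^\perp T$ outside $\add T$ is (up to isomorphism) $P_2$, and its two-dimensional endomorphism algebra $\End_\Lambda(P_2)$ has its nontrivial element factoring through $P_1\in\add T$ (this is precisely where the relation $\alpha\beta\alpha=0$ is used: the composition $P_2\to P_1\to P_2$ accounts for $\alpha\beta$, but any longer composition vanishes). Hence $\End_{\overline{^\perp T}}(P_2)\cong k$ and $\mod\overline{^\perp T}\simeq\mod k$ is semisimple, so $\gd\overline{^\perp T}=0$ and in particular $\ds(\overline{^\perp T})=0$. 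By the singular equivalence, $\ds(\overline{\mod}\Lambda)=0$ as well, and since vanishing of the singularity category is equivalent to finite global dimension of the functor category, both $\overline{^\perp T}$ and $\overline{\mod}\Lambda$ have finite global dimension.

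For $n\geq 2$, the plan is to exhibit a representable projective functor of infinite injective dimension in $\mod\overline{^\perp T}$, which shows $\overline{^\perp T}$ is not Iwanaga-Gorenstein. Applying Proposition~\ref{prop:key} with $\B:=\add T$ reduces the computation of the syzygies of $\overline{^\perp T}(-,M)$ in $\mod\overline{^\perp T}$ to the iterated $\add T$-syzygies $\Omega^k_{\add T}M$ in $^\perp T$. For a well-chosen indecomposable $M$ visible in the displayed AR quiver of $\overline{^\perp T}$ (for instance one of the short uniserial modules), the resulting chain is expected to cycle periodically among indecomposables of $\overline{^\perp T}\setminus\add T$, so the corresponding injective functor on $\overline{^\perp T}$, which via the Auslander-Reiten duality has the form $\Ext^1_\Lambda(-,N)$ for some $N\in {}^\perp T$, has infinite projective dimension. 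A parallel analysis along the AR quiver of $\overline{\mod}\Lambda$ handles the other category.

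The principal obstacle is the $n\geq 2$ case: because Iwanaga-Gorensteinness is not preserved under singular equivalence, both categories must be treated by a direct computation rather than transferred from one another, and the witnessing module must be identified carefully. The technical heart is to check that the chosen $M$'s $\add T$-syzygies genuinely stay outside $\add T$ for all iterations, which requires walking along the mesh relations in the displayed AR quivers and verifying that the process never terminates at an object of $\add T$.
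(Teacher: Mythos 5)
Your $n=1$ step is factually wrong. For $n=1$ the indecomposable objects of $^\perp T$ are the uniserials with socle $1$, namely $[1]_1,[1]_2,[1]_3,[1]_4$, and $T=[1]_1\oplus[1]_4$, so $\overline{^\perp T}$ has \emph{two} nonzero indecomposables, $[1]_2$ and $[1]_3$ (this is exactly what the paper's displayed AR quiver of $\overline{^\perp T}$ gives when specialized to $n=1$), and the inclusion $[1]_2\hookrightarrow[1]_3$ does not factor through $\add T$ (any composite $[1]_2\to[1]_4\to[1]_3$ vanishes, since $[1]_2$ is the kernel of the nonzero map $[1]_4\to[1]_3$, and $\Hom([1]_2,[1]_1)=0$). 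Hence $\overline{^\perp T}$ is not equivalent to $\proj k$ and $\mod\overline{^\perp T}$ is not semisimple; it is equivalent to $\mod$ of the path algebra of $A_2$, so the conclusion $\gd\overline{^\perp T}<\infty$ survives, but not by the argument you give. Also, your transfer ``$\ds=0$ iff finite global dimension'' is false for general functor categories (vanishing of $\ds$ only says every finitely presented functor has finite projective dimension); it is legitimate here only because $\Lambda$ is representation-finite, so $\mod(\overline{\mod}\Lambda)$ and $\mod(\overline{^\perp T})$ are module categories over artin algebras---a point you must make explicit, especially since the paper proves only the $n\geq 2$ case and leaves $n=1$ to the reader.

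For $n\geq 2$, which is the actual content of the paper's proof, your proposal is only a plan: you never identify the witnessing module nor verify the periodicity, and that explicit computation \emph{is} the proof (the paper computes the projective resolution of the injective module $I_3=D\overline{^\perp T}([1]_3,-)$ and observes $\Omega^2I_3\cong\Omega^8I_3$, whence $\pd I_3=\infty$). Your proposed mechanism is also shaky: applying Proposition \ref{prop:key} with $\B:=\add T$ inside $\A:={}^\perp T$ requires the standing hypothesis that $\A$ be closed under $\B$-epikernels, which you do not check and which is not automatic, since $\add T$-epimorphisms need not be epimorphisms; moreover ``syzygies of $\overline{^\perp T}(-,M)$'' is beside the point, as these functors are projective---what is needed is a projective resolution of an injective functor (equivalently, an injective coresolution of a projective one). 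Finally, for $\overline{\mod}\Lambda$ you propose a second independent AR-quiver computation; this could in principle work, but the paper avoids it by a cheaper transfer that you miss: the canonical functor $Q:\mod(\overline{\mod}\Lambda)\to\mod(\overline{^\perp T})$ is exact, preserves projectives, and sends some injective $J$ to $I_3$, so Iwanaga-Gorensteinness of $\overline{\mod}\Lambda$ would force $\pd J<\infty$ and hence $\pd I_3<\infty$, a contradiction. In sum, you correctly recognize that Iwanaga-Gorensteinness does not transfer along the singular equivalence and that both categories need attention, but the decisive computations and hypothesis checks are left undone, and the $n=1$ analysis as written is incorrect.
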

\begin{proof}
We only check the case of $n\geq 2$.
By calculations,
the injective $(\overline{^\perp T})$-module $D\overline{^\perp T}([1]_3,-)$ has the following projective resolution:
$$\cdots\to P_5\to P_3\to P_{2n+1}\to P_{2n-1}\to P_{2n+1}\to P_3\to P_4\to P_{2n+1}\to I_3\to 0,$$
where we set $I_3:=D\overline{^\perp T}([1]_3,-)$ and
$P_l:=\overline{^\perp T}(-,[1]_l)$ for each $1\leq l\leq 2n+1$.
We notice that $\Omega^2I_3\cong \Omega^8I_3$.
Hence $\overline{^\perp T}$ is non Iwanaga-Gorenstein.
It remains to check the assertion for $\overline{\mod}\Lambda $.
We denote by $Q: \mod(\overline{\mod}\Lambda )\to \mod(\overline{^\perp T})$ the canonical functor.
There exists an injective object $J\in\inj(\overline{\mod}\Lambda )$ such that $QJ\cong I_3$.
If $\overline{\mod}\Lambda $ is Iwanaga-Gorenstein, then $J$ is of finite projective dimension.
Moreover, since $Q$ is exact and preserves projectives, it turns out that $I_3$ is of finite projective dimension.
This is a contradiction.
\end{proof}
\end{example}


\end{document}